\documentclass{article}
\usepackage{amsmath}
\usepackage{amssymb}
\usepackage{mathabx}
\usepackage{amsthm}
\usepackage{mathrsfs}
\usepackage[all]{xy}
\usepackage{cite}
\usepackage{graphicx}
\usepackage{subfig}
\usepackage{float}
\usepackage{enumitem}
\usepackage{pifont}
\usepackage{xfrac}
\usepackage{appendix}
\setitemize[1]{itemsep=0pt,parsep=\parskip,topsep=0pt}
\begin{document}
\theoremstyle{definition}
\newtheorem*{theorem}{Theorem}
\newtheorem{lemma}{Lemma}[section]
\newtheorem{proposition}[lemma]{Proposition}
\newtheorem{corollary}[lemma]{Corollary}
\newtheorem{propdefn}[lemma]{Proposition and Definition}
\newtheorem*{definition}{Definition}
\newtheorem{convention}{Convention}
\newtheorem*{fact}{Fact}
\newtheorem{example}{Example}
\theoremstyle{remark}
\newtheorem*{remark}{Remark}
\title{\textbf{Isoparametric foliations of $S^n\times S^n$ of codimension $1$}}
\author{Teng Wang}
\date{}
\maketitle

\abstract{In this article, we give several families of new examples of isoparametric foliations of $S^n\times S^n$;
establish some basic properties of isoparametric foliations of $S^n\times S^n$ of codimension $1$,
especially any of them can be realized as level sets of the restriction of
a bi-homogeneous polynomial of bi-degree $(1,1),(2,2),(3,3),(4,4)$ or $(6,6)$ to $S^n\times S^n$;
and classify bi-homogeneous isoparametric polynomials of bi-degree $(1,1)$ and $(2,2)$
and isoparametric foliations on $S^n\times S^n$ with $n$ even and not equal to $4$. }

\medskip\noindent
		\textbf{Mathematics Subject Classification(2020)}. 53C12, 53C40, 53C42.
		
		\bigskip

\section{Introduction}

Let $M$ be a smooth manifold and $\mathcal{F}$ a disjoint union of connected immersed submanifolds of $M$
which are called leaves. We say $\mathcal{F}$ is a foliation of $M$ if for any $L\in\mathscr{F}$ and $p\in L$,
$$T_pL=\{X_p|X\text{ is a smooth vector field on $N$ tangent to each leaf of }\mathscr{F}\}.$$
A foliation is regular if all its leaves are of the same dimension, singular otherwise.
A codimension-$1$ foliation on a Riemannian manifold is isoparametric if its leaves
are equidistant and have constant mean curvature.
The most classical example of isoparametric foliations in the physical world is
the collection of the wavefronts of a propagating wave with a uniform source density.

A function $F$ on a Riemannian manifold with $|\mathrm{grad}\,F|^2$ and $\Delta\, F$ both functions of $F$
is called an isoparametric function. A hypersurface $M$ on a Riemannian manifold is isoparametric if
it is a level set of an isoparametric function defined on a neighbourhood of $M$,
or equivalently, a leaf of an isoparametric foliation defined on a neighbourhood of $M$.

The three concepts isoparametric foliation of codimension $1$, connected closed isoparametric hypersurface, isoparametric function
are essentially equivalent when the ambient space is a space form,
for they can be constructed from each another in the following ways:
\begin{itemize}
\item[1.] a regular level set of an isoparametric function is a closed isoparametric hypersurface;
\item[2.] connected components of level sets of an isoparametric function make up an isoparametric foliation;
\item[3.] fix the leaf $L$ of an isoparametric foliation, the distance function $\mathrm{dist}(L,\cdot)$
on the ambient space is an isoparametric function;
\item[4.] the family of submanifolds obtained by parallel translation of a connected closed isoparametric hypersurface
forms an isoparametric foliation.
\end{itemize}

When the ambient space is a general Riemannian manifold, 4.) may not hold since
the isoparametric property of a hypersurface may not be preserved by parallel translation.
However, by Lemma \ref{isoparabebihomo}, when the ambient space is $S^n\times S^n$,
which we mainly talk about in this article, 4.) holds, and the three concepts are essentially equivalent.

Isoparametric hypersurfaces has been studied as a mathematical object since 1919 \cite{Somigliana1918Huygens}.
Initially, mathematicians are only interested in isoparametric hypersurfaces
in space forms, which originate from physics naturally.
The classification of isoparametric hypersurfaces in Euclidean and real hyperbolic spaces
has not taken too much effort, and is completed by
Segre \cite{segre1938famiglie} and Cartan \cite{cartan1938hyperbolic}, respectively.
However, isoparametric hypersurfaces in spheres are far more complicated,
and its study has engaged mathematicians for generations.
Cartan has studied isoparametric hypersurfaces in space forms and proven many basic properties of them
in \cite{cartan1938hyperbolic,cartan1939sphere31,cartan1939sphere32,cartan1940sphere4},
especially a hypersurface in a space form is isoparametric if and only if its principal curvatures are constant.

In \cite{cartan1939sphere31}, Cartan has asked three questions for isoparametric hypersurfaces in spheres
which is instructive to the subsequent research:
\begin{itemize}
\item[1.] For any positive integer $g$, dose there exist an isoparametric hypersurface with $g$ distinct principal curvatures?
\item[2.] Does there exist an isoparametric hypersurface with principal curvatures of different multiplicities.
\item[3.] Is every isoparametric hypersurface homogeneous?
\end{itemize}
Nomizu \cite{nomizu1973some} has given out examples with four distinct principal curvatures of multiplicities
$m_1=m_3=m$, $m_2=m_4=1$. That gives an affirmative answer to Cartan's second question.
Hsiang and Lawson \cite{hsiang1971minimal}, and Takagi \cite{takagi1972principal} have classified
homogeneous isoparametric hypersurfaces in spheres.
Ozeki and Takeuchi \cite{OT1975eg1,OT1976eg2} have constructed
examples with four distinct pricipal curvatures using representations of Clifford algebras.
Some of their examples are not included in the list of homogeneous ones.
That gives a negative answer to Cartan's third question.
Ferus, Karcher and M{\"u}nzner \cite{FKM1981eg} have constructed infinitely many families of examples
with $g=4$ using representations of Clifford algebras,
$$F(x)=\sum_{i=1}^m\langle P_ix,x\rangle^2,$$
where $e_i\mapsto P_i$ ($i=1,\dots,m$) stands for a representation of the Clifford algebra $Cl_m^*$,
which is a generalization of Ozeki and Takeuchi's construction, and called FKM-type.
M{\"u}nzner \cite{munzner1980isoparametrische},\cite{munzner1981isoparametrische} have established many important properties of isoparametric hypersurfaces in spheres, representatively,
\begin{itemize}
\item[1.] the pricipal curvatures are $\lambda_\alpha=\cot\theta_\alpha$ for $\alpha=1,\dots,g-1$ with
$0\le\theta_1\le\cdots\le\theta_{\alpha-1}<\pi$ and $\theta_\alpha=\theta_0+\frac{\alpha\pi}{g}$,
its multiplicities satisfies $m_\alpha=m_{\alpha+2}$ (indices modulo $g$);
\item[2.] any isoparametric hypersurface is a level set of the restriction of a Cartan-M{\"u}nzner polynomial to $S^n$;
\item[3.] the number of distinct principal curvatures can only be 1,2,3,4 or 6.
\end{itemize}
The item $3.)$ above gives a negative answer to Cartan's first question.

Besed on the above results, it is reasonable to conjecture that
an isoparametric hypersurface in a sphere is either homogeneous or of FKM-type.
In fact, that is
proven to be true after decades of effort by numerous mathematicians.
The case $g=1,2,3$ is proven by Cartan \cite{cartan1939sphere31,cartan1939sphere32}.
For $g=4$, Takagi \cite{takagi1976class}; Ozeki and Takeuchi \cite{OT1975eg1,OT1976eg2};
Cecil, Chi and Jensen \cite{cecil2007classification4};
Chi \cite{chi2011classification4,chi2013classification4,chi2020classification4}
have proven the cases $m_1=1$; $m_1=2$; $m_2\ge2m_1-1$; and $(m_1,m_2)=(3,4)(4,5),(6,9),(7,8)$, respectively.
For $g=6$, Abresch \cite{abresch1983isoparametric} has showed that $m_1=m_2=1$ or $m_1=m_2=2$,
the former case is proven by Dorfmeister and Neher \cite{dorfmeister1985classification6},
and the latter case by Miyaoka \cite{miyaoka2013classification6,miyaoka2016errata}.

Besides the case of space forms, isoparametric hypersurfaces in symmetric spaces
is also of great interest to mathematicians.
But the classification of isoparametric hypersurfaces in symmetric spaces, even rank-$1$ symmetric spaces
 is not completed yet.
A great progress is made by Dom{\'\i}nguez-V{\'a}zquez and his cooperators.
They classified isoparametric hypersurfaces in $\mathbb{C}P^n$, $\mathbb{H}P^n$
and $\mathbb{C}H^n$ in a series of articles
\cite{dominguez2016isoparametric,dominguez2018polar,dominguez2025isoparametric,diaz2017isoparametric}.

Isoparametric theory are also extended to higher codimensions and more general ambient spaces:
Wang has studied isoparametric functions on general Riemannian manifolds in \cite{wang1987isoparametric}.
Harle \cite{harle1982isoparametric}, Carter and West \cite{carter1985isoparametric},
Terng \cite{terng1985isoparametric}, and etc. have respectively given the definition of
isoparametric submanifold and isoparametric map in space forms,
which are generalizations of isoparametric hypersurface and isoparametric function, respectively.
Terng and Thorbergsson \cite{terng1995submanifold} also generalized the concept isoparametric submanifold
to equifocal submanifold which is defined in symmetric spaces.
For more study on isoparametric theory on general Riemannian manifolds,
see \cite{ge2010isoparametric,ge2014geometry,miyaoka2013transnormal}.

An isoparametric submanifold of $\mathbb{R}^N$ or $\mathbb{H}^N$ of codimension $r$ is
essentially an isoparametric submanifold in $S^{N-1}$ of codimension $(r-1)$
(see \cite{terng1985isoparametric,wu1992isoparametric} for a proof).
Thorbergsson \cite{thorbergsson1991isoparametric} and Olmos \cite{olmos1993isoparametric} have proven that
an isoparametric foliation of a sphere of codimension $\ge2$ with compact full irreducible regular leaves is homogeneous.
Therefore, the classification of isoparametric submanifolds of space forms of any codimension is reduced to
the classification of isoparametric hypersurfaces in spheres and isometry group actions on spheres.
Christ \cite{christ2002homogeneity} has proven that a complete connected irreducible equifocal submanifold
of codimension $\ge2$ of a simply connected compact symmetric spaces is homogeneous,
which reduces the classification of equifocal submanifolds of compact symmetric spaces to
the codimension-$1$ case and the homogeneous case.
Therefore, it is reasonable to believe that the codimension-$1$ case is a crucial part of
the classification project in isoparametric theory.

Urbano \cite{urbano2016S2S2} is the first to study isoparametric hypersurfaces in $S^2\times S^2$,
and classified them. In his article, Urbano defined the product structure $P(X,Y)=(X,-Y)$,
the angle function $C=\langle PN,N\rangle$ and a vector field $V=PN-CN$.
Many subsequent articles is inspired by those concepts.

Gao, Ma and Yao have classified isoparametric hypersurfaces in $\mathbb{H}^2\times\mathbb{H}^2$ in \cite{gao2024hypersurfaces},
and then in \cite{gao2023productspace} isoparametric hypersurfaces in a product space
$M_{\kappa_1}\times M_{\kappa_2}$,
where $M_\kappa$ is the $2$-dimensional space form of constant curvature $\kappa\in\{-1,0,1\}$.
De Lima and Pipoli \cite{de2024isoparametric} have also classified isoparametric hypersurfaces
in $S^n\times\mathbb{R}$ and $\mathbb{H}^n\times\mathbb{R}$.
Tan, Xie and Yan \cite{tan2025isoparametric} have enhanced De Lima and Pipoli's result,
classified isoparametric hypersurfaces in $S^n\times\mathbb{R}^m$ and $\mathbb{H}^n\times\mathbb{R}^m$
with constant angle function.
De Lima and Pipoli \cite{de2025isoparametric} have proven any connected isoparametric hypersurface of
$\mathbb{Q}^{n_1}_{\epsilon_1}\times\mathbb{Q}^{n_2}_{\epsilon_2}$ has constant angle function,
and classified homogeneous isoparametric hypersurfaces in $\mathbb{Q}^{n_1}_{\epsilon_1}\times\mathbb{Q}^{n_2}_{\epsilon_2}$
satisfying a one-point condition, where $\mathbb{Q}^n_\epsilon$ is the $n$-dimensional space form of constant curvature $\epsilon\in\{-1,0,1\}$.
Mafio, JBM. dos Santos, JP. dos Santos and Van der Veken \cite{manfio2025hypersurfaces} have classified hypersurfaces in
$S^3\times\mathbb{R}$ and $\mathbb{H}^3\times\mathbb{R}$ with constant principal curvatures,
and proven they are all isoparametric.
For more study on submanifold geometry on product spaces, see \cite{li2024hypersurfaces,gao2022real,chen2021minimal}.

For detailed surveys on isoparametric theory, see \cite{chi2020isoparametric,ge2025overview}.

In this article, we extend the inquiry of Urbano and study isoparametric foliations of $S^n\times S^n$ for arbitrary $n$,
and take a step forward on their classification.

Besides constant functions, the most straightforward example of isoparametric function on $S^n\times S^n$ is $F(x,y)=f(x)$ or $F(x,y)=f(y)$
with $f$ an isoparametric function on $S^n$.
These examples are essentially  isoparametric functions on $S^n$, and we call them trivial.
More precisely, for an isoparametric function $F$ on $S^n\times S^n$, if $F(x,y)=f(x),f(y)$ or $c$
for some function $f$ on $S^n$ or $c\in\mathbb{R}$, we call $F$ and the isoparametric foliation of its level sets trivial.
In the following sections, isoparametric functions and isoparametric foliations on $S^n\times S^n$
are assumed to be nontrivial, unless otherwise specified.

The main result of this article is as follows.
\begin{theorem}
\begin{itemize}
\item[(A)]A nontrivial isoparametric foliation for $S^n\times S^n$ is determined by
the restriction of a bi-homogeneous polynomial of bi-degree $(g,g)$, $g=1,2,3,4$ or $6$, to $S^n\times S^n$,
whose restirction to any section $S^n\times\{y\}$ or $\{x\}\times S^n$ is an isoparametric polynomial for $S^n$ of degree $g$;
\item[(B)]a nontrivial isoparametric foliation of $S^n\times S^n$ of bi-degree $(1,1)$ is the collection of level sets of
$$F(x,y)=\langle x,y\rangle,$$
up to isometric transformations of $S^n\times S^n$;
\item[(C)]a nontrivial isoparametric foliation of $S^n\times S^n$ of bi-degree $(2,2)$ is the collection of level sets of
\begin{equation}\label{cuieg}
F(x,y)=\langle x,y\rangle^2+\sum_{i=1}^{m-1}\langle x,E_iy\rangle^2,
\end{equation}
up to isometric transformations of $S^n\times S^n$,
where $E_i=\phi(e_i)$ for $i=1,\dots,m-1$, $\phi$ is a representation of the Clifford algebra $Cl_{m-1}$.
\end{itemize}
\end{theorem}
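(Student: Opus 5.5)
The argument splits into three pieces: first show the angle function is constant, then run Münzner's argument on each factor, and finally classify the low-degree cases by linear algebra.

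\textbf{Step 1 (constant angle).} Let $F$ be a nontrivial isoparametric function on $S^n\times S^n$ and $N=\mathrm{grad}\,F/|\mathrm{grad}\,F|$ the unit normal of a regular leaf. Following Urbano, put $P(X,Y)=(X,-Y)$ and $C=\langle PN,N\rangle$. I would first show that $C$ is constant on the whole foliation, in the spirit of de Lima--Pipoli for $\mathbb{Q}^{n_1}\times\mathbb{Q}^{n_2}$. Nontriviality should force $C=0$, so that $N$ splits evenly as $N=(N_1,N_2)$ with $|N_1|=|N_2|=1/\sqrt2$. Then for fixed $y$ the function $f_y=F(\cdot,y)$ on $S^n$ has $|\mathrm{grad}\,f_y|^2=\tfrac12|\mathrm{grad}\,F|^2$, which is a function of $f_y$. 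Next I would split $\Delta F=\Delta_1F+\Delta_2F$ and use the symmetry under swapping factors that comes with $C=0$, together with the equidistance of leaves, to conclude that $\Delta_1 F$ is also a function of $F$. Then each slice $f_y$ is isoparametric on $S^n$, and it has the same profile functions for every $y$.

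\textbf{Step 2 (polynomial, part (A)).} Apply Münzner's theorem to each slice. Normalize $F$ through its focal values so that $f_y$ restricts a Cartan--Münzner polynomial of degree $g\in\{1,2,3,4,6\}$. Because the profile functions do not depend on $y$, $g$ and the normalization are the same for all $y$. Symmetrically, for each $x$ the function $F(x,\cdot)$ is a Cartan--Münzner polynomial of the same degree $g$. The space of homogeneous degree-$g$ polynomials in $x$ is finite dimensional, and its coefficients are smooth functions of $y$ that are themselves degree-$g$ homogeneous polynomials in $y$. Hence $F$ is bi-homogeneous of bi-degree $(g,g)$. Equidistance of the leaves and the distance construction of Lemma~\ref{isoparabebihomo} then show that the whole foliation is determined by this polynomial.

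\textbf{Step 3 (part (B)).} For $g=1$, write $F(x,y)=\langle x,Ay\rangle$. The slice condition $|\mathrm{grad}\,f_y|^2=1-f_y^2$ becomes $|Ay|^2=1$ for all $y\in S^n$, so $A$ is orthogonal. Applying the isometry $(x,y)\mapsto(x,Ay)$ reduces $F$ to $\langle x,y\rangle$.

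\textbf{Step 4 (part (C)).} For $g=2$, write $F(x,y)=\sum_{a}\langle x,A_ay\rangle^2$, i.e.\ $F(x,y)=\langle x,Q(y)x\rangle$ with $Q(y)=\sum_a A_ay\,(A_ay)^t$. A Cartan polynomial of degree $2$ on $S^n$ is $2|P_Vx|^2-1$ up to scaling, with $P_V$ an orthogonal projection. So for every unit $y$, $Q(y)$ is a projection onto a subspace $V(y)$ of fixed rank $m$. The same holds with the roles of $x$ and $y$ exchanged. Taking an orthonormal basis of $V(y)$ that depends linearly on $y$, I would rewrite $F=\sum_{i=0}^{m-1}\langle x,B_iy\rangle^2$ with $\langle B_iy,B_jy\rangle=\delta_{ij}|y|^2$. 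Polarizing this identity shows that the $B_i$ are orthogonal and that $B_j^{-1}B_i$ is skew for $i\neq j$. After the isometry normalizing $B_0=I$, the matrices $E_i=B_i$ are skew with $E_iE_j+E_jE_i=-2\delta_{ij}I$, which is exactly a representation of $Cl_{m-1}$.

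\textbf{Main obstacle.} The hardest step is Step~4: making the linear choice of basis rigorous, that is, showing the rank-$m$ projection-valued quadratic map $Q$ really factors through $m$ linear maps. I would first try to exploit the symmetric condition in $x$: for fixed $x$, the vectors $A_a^tx$ span an $m$-dimensional space. Comparing with the proof of the Hurwitz--Radon theorem should give the orthogonal-design identity. Step~1, the constancy and vanishing of $C$, is the other delicate point.
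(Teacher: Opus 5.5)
Steps~2 and~3 are fine. Step~3 is exactly the paper's argument for (B), and the joint polynomiality in Step~2 holds because the $x$-coefficients are recovered by interpolation from finitely many $F(x_j,\cdot)$. There are, however, two genuine gaps.

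\textbf{Step~1: nontriviality does not give $C=0$.} Nontriviality only rules out $C=\pm1$. Once $C$ is constant, you have $|\widetilde{\mathrm{grad}}^xF|=a|\widetilde{\mathrm{grad}}F|$ and $|\widetilde{\mathrm{grad}}^yF|=b|\widetilde{\mathrm{grad}}F|$ with constants $a,b>0$. The two slice families are then isoparametric of some degrees $g_1,g_2$ with $a/b=g_1/g_2$. Nothing you wrote excludes bi-degree $(g_1,g_2)$ with $g_1\ne g_2$, and your sentence ``symmetrically \ldots\ of the same degree $g$'' silently assumes it.

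The missing idea is the paper's dimension count. The focal varieties $M_\pm$ meet every slice $S^n\times\{y\}$ and $\{x\}\times S^n$ transversally. So both slice foliations have focal sets of codimension $m_\pm+1$ in $S^n$, i.e.\ the same multiplicities $(m_+,m_-)$. M\"{u}nzner's relation $g(m_++m_-)=2(n-1)$ then forces $g_1=g_2$. Only after that do you get $|\widetilde{\mathrm{grad}}^xF|^2=g^2(1-F^2)=|\widetilde{\mathrm{grad}}^yF|^2$, i.e.\ $C=0$.

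Also, $C=0$ gives no symmetry exchanging the factors, and constancy of $\Delta F$ on leaves only controls $\Delta_1F+\Delta_2F$. To conclude that the slices are isoparametric, use what the paper uses: Wang's theorem that transnormal functions on $S^n$ are isoparametric.

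\textbf{Step~4: the key step of (C) is not proved.} You need an orthonormal frame of $V(y)$ depending linearly on $y$, equivalently $F=\sum_i\langle x,B_iy\rangle^2$ with $\langle B_iy,B_jy\rangle=\delta_{ij}|y|^2$. That is the entire content of the classification, and you give no argument for it. A Hurwitz--Radon comparison needs the orthogonality relations as input, whereas ``the $A_a^tx$ span an $m$-dimensional space'' is only a rank condition. Even your starting form $F=\sum_a\langle x,A_ay\rangle^2$ is not automatic: nonnegative biquadratic forms need not be sums of squares of bilinear forms (Choi's example).

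After the shift $F\mapsto(F+|x|^2|y|^2)/2$, your hypotheses amount to $|\mathrm{grad}\,F|^2=4(|x|^2+|y|^2)F$ and $\Delta F=c(|x|^2+|y|^2)$ on $\mathbb{R}^{2n+2}$. This is a Cartan--M\"{u}nzner-type system, and the paper does not try to solve it by hand. Instead it proceeds as follows:
\begin{itemize}
\item[1.] Via Lemma~\ref{nablasqlemma1}, it treats $F$ as an isoparametric polynomial for $S^{2n+1}$.
\item[2.] It invokes the full classification of isoparametric hypersurfaces in spheres: for $g=4$, FKM type and the homogeneous families on $\mathbf{so}(5,\mathbb{F})$; and the $g=2$ cases.
\item[3.] It determines which of these become bi-homogeneous after an orthogonal change of coordinates. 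This uses quasi-skew-diagonalizable Clifford systems in the FKM case (Lemmas~\ref{functiontomatrix}, \ref{rotationuniqueness}, \ref{QSD}), dimension counts of linear subspaces in the null cone for $\mathbf{so}(5,\mathbb{F})$, and direct inspection for $g=2$.
\item[4.] It matches the survivors with Cui's formula in Lemma~\ref{cliffordegeq}.
\end{itemize}
If completed, your linear-algebra route would give a more elementary proof that bypasses this deep classification. As written, though, its key step is only a hope.
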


As a corollary of the Theorem, when $n$ is an even numbers $\ne4$ or $n=3$, a nontrivial isoparametric foliation of $S^n\times S^n$ is
one of those ones given in (B) or (C) of the theorem.

It is worth noting that the Clifford examples are initially given in the form of
(\ref{fulleg}) and (\ref{notfulleg}) in Subsection \ref{exampleclifford}.
Cui is the first to think of Clifford examples using representations of $Cl_m$,
and write down the formula (\ref{cuieg}) without knowing (\ref{fulleg}) and (\ref{notfulleg}). 
Cui's formula turns out to be the same as
(\ref{fulleg}) and (\ref{notfulleg}) up to congruence with respect to $S^n\times S^n$.
See Lemma \ref{cliffordegeq} for a proof of that.

Cui \cite{cui2025cone} has used minimal level sets of (\ref{cuieg})
to construct infinity many families of new area-minimizing cones of codimension $2$.

This article is organized as follows:
in Section \ref{prelimilaries}, we introduce some basic definitions, conventions and results used in subsequent sections;
in Section \ref{eg}, we give some examples of isoparametric functions on $S^n\times S^n$;
in Section \ref{property}, we prove (A) of the Theorem in steps,
establish ways to compute the second fundamental forms and the principal curvatures
of isoparametric hypersurfaces in $S^n\times S^n$,
and inquire homogeneity and integral submanifolds of principal distributions;
in Section \ref{classification}, we classify nontrivial isoparametric foliations of $S^n\times S^n$ of bi-degree $(1,1)$ and $(2,2)$.

The author here acknowledge Professor Xiaowei Xu for providing the problem of classifying
isoparametric foliations of $S^n\times S^n$ and his visionary guidance,
and Hongbin Cui for his instructive discussions and invaluable advices,
especially proposing the unified formula (\ref{cuieg}) of all Clifford examples.
This work is supported by the NSFC (No.12571057),
the project of Stable Support for Youth Team in Basic Research Field, CAS (YSBR-001).

This article was submitted to the Journal of Geometric Analysis on 14 January 2026 with submission ID JGEA-D-26-00069. 
Compared with that version, in this version of the article, we corrected some typographical errors, restated some sentences for clarity, and replaced the latter part of the proof to Lemma \ref{prinfoli} with a new proof. 

\section{Preliminaries}\label{prelimilaries}

This section is divided into four parts, basic properties of bi-homogeneous polynomial,
evolution equations of hypersurfaces moving in parallel,
the Cartan-M\"{u}nzner equations,
and some basic properties of representations of Clifford algebras.
Although most the results in this section are not new,
we present proofs or computations of them for completeness.

Isoparametric functions on $S^n$ and $S^n\times S^n$ are both included in this article.
To distinguish them, we adopt the following conventions.

For a function $F$ defined on $\mathbb{R}^{n+1}$ (resp. $\mathbb{R}^{n+1}\times\mathbb{R}^{n+1}$),
we call it isoparametric for $S^n$ (resp. $S^n\times S^n$)
if $F|_{S^n}$ (resp. $F|_{S^n\times S^n}$) is isoparametric.

We hat operators on $\mathbb{R}^{n+1}$, $S^n$ (or $S^n\times S^n$) and a hypersurface $M$ in $S^n$ (or $S^n\times S^n$)
with nothing, ``$\ \widetilde{\,}\ $'' and ``$\ \widehat{\,}\ $'' respectively to distinguish them,
except in Subsection \ref{ODE} and Subsection \ref{principalprop}.

Let $F$ and $F'$ be functions defined on $\mathbb{R}^{n+1}$ (resp. $\mathbb{R}^{n+1}\times\mathbb{R}^{n+1}$),
we say $F$ and $F'$ are congruent to each other with respect to $S^n$ (resp. $S^n\times S^n$)
if there exists $A\in O(n+1)$ (resp. $\begin{bmatrix}O(n+1)&\\&O(n+1)\end{bmatrix}\rtimes
\{\begin{bmatrix}I&\\&I\end{bmatrix},\begin{bmatrix}&I\\I&\end{bmatrix}\}$)
such that $F(X)=F'(AX)$ for any $X\in\mathbb{R}^{n+1}$ (resp. $\mathbb{R}^{2n+2}$).

\subsection{Bi-homogeneous polynomials}

We say a function $f:\mathbb{R}^{n+1}\times\mathbb{R}^{n+1}$ is bi-homogeneous of bi-degree $(r,s)$
if $f(cx,y)=c^rf(x,y)$ and $f(x,cy)=c^sf(x,y)$ for any $(x,y)\in \mathbb{R}^{n+1}\times\mathbb{R}^{n+1}$ and
$c\in\mathbb{R}^+$.

\begin{lemma}\label{nablabi}
Let $F:\mathbb{R}^{n+1}\times\mathbb{R}^{n+1}$ be a bi-homogeneous function of bi-degree $(r,s)$, then
on $S^n\times S^n$ we have
\begin{itemize}
\item[1.]$|\widetilde{\mathrm{grad}}\,F|^2=|\mathrm{grad}\,F|^2-(r^2+s^2)F^2$,
\item[2.]$\widetilde{\Delta}\,F=\Delta\,F-((r+n-1)r+(s+n-1)s)F$.
\end{itemize}
\end{lemma}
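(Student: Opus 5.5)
We compute on $S^n\times S^n\subset\mathbb{R}^{n+1}\times\mathbb{R}^{n+1}$, which is a submanifold of codimension $2$ with unit normal fields $\nu_1=(x,0)$ and $\nu_2=(0,y)$ at a point $(x,y)$. These are orthonormal because $|x|=|y|=1$. The plan is to decompose the Euclidean gradient and Laplacian into tangential and normal parts and to evaluate the normal parts using bi-homogeneity. We denote the partial gradients by $\mathrm{grad}_x F$ and $\mathrm{grad}_y F$.

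For item 1, differentiate $F(cx,y)=c^rF(x,y)$ and $F(x,cy)=c^sF(x,y)$ at $c=1$. This gives the Euler identities
\[
\langle \mathrm{grad}_x F,x\rangle = rF,\qquad \langle \mathrm{grad}_y F,y\rangle = sF .
\]
Hence $\langle\mathrm{grad}\,F,\nu_1\rangle=rF$ and $\langle\mathrm{grad}\,F,\nu_2\rangle=sF$. The intrinsic gradient $\widetilde{\mathrm{grad}}\,F$ is the tangential projection of $\mathrm{grad}\,F$. Pythagoras therefore gives $|\widetilde{\mathrm{grad}}\,F|^2=|\mathrm{grad}\,F|^2-r^2F^2-s^2F^2$.

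For item 2, we use the standard formula for the restriction of an ambient function to a submanifold:
\[
\widetilde{\Delta}F=\Delta F-\sum_{i=1}^2\mathrm{Hess}\,F(\nu_i,\nu_i)+\langle \vec H,\mathrm{grad}\,F\rangle ,
\]
where $\vec H$ is the (unnormalized) mean curvature vector of $S^n\times S^n$. Since $S^n\times S^n$ is a product of round unit spheres, $\vec H=-n\nu_1-n\nu_2$. This gives the term $-n(rF+sF)$.

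For the Hessian terms, differentiate $F(cx,y)=c^rF$ twice in $c$ at $c=1$. This gives $\mathrm{Hess}\,F(\nu_1,\nu_1)=r(r-1)F$, and similarly $\mathrm{Hess}\,F(\nu_2,\nu_2)=s(s-1)F$. Combining the three contributions,
\[
\widetilde{\Delta}F=\Delta F-\bigl(r(r-1)+nr+s(s-1)+ns\bigr)F=\Delta F-\bigl((r+n-1)r+(s+n-1)s\bigr)F .
\]

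The only delicate point is the sign convention for $\vec H$ and the Laplacian in the submanifold formula. To guard against a sign error, we will check the formula on $F(x,y)=|x|^2$, where $r=2$ and $s=0$. On $S^n\times S^n$ this function is the constant $1$, so $\widetilde{\Delta}F=0$. The right-hand side gives $\Delta F-2(n+1)F=2(n+1)-2(n+1)=0$, which confirms the signs.
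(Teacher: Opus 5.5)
Your proof is correct, and it takes a slightly different route from the paper. The paper works factor by factor. It applies the known single-sphere formulas for a degree-$r$ homogeneous function restricted to $S^n$ (cited from Theorem 3.30 of the reference book) on each slice $S^n\times\{y\}$ and $\{x\}\times S^n$. It then adds the two, using the fact that the gradient and Laplacian of the product metric split into the slice operators.

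You instead treat $S^n\times S^n$ directly as a codimension-$2$ submanifold of $\mathbb{R}^{2n+2}$. Everything then comes from three ingredients: the Euler identities, the radial second derivatives $r(r-1)F$ and $s(s-1)F$, and the mean curvature vector $\vec H=-n\nu_1-n\nu_2$. In effect you reprove the cited single-sphere lemma and the product splitting in one computation.

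The paper's version is shorter given the citation. Yours is self-contained and would carry over verbatim to $S^{n_1}\times S^{n_2}$, or to products of several spheres with multi-homogeneous $F$.

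Your sanity check with $F=|x|^2$ also pays off here. The paper's intermediate line writes the slice formula as $\widetilde{\Delta}^xF=\Delta^xF-(r-n+1)rF$, which is a typo. Your computation confirms that the coefficient $(r+n-1)r$ in the statement is the correct one.
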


\begin{proof}
Let $\mathrm{grad}^xF,\mathrm{grad}^yF,\widetilde{\mathrm{grad}}^xF$ and $\widetilde{\mathrm{grad}}^yF$
be the gradient operators on $\mathbb{R}^{n+1}\times\{y\},\{x\}\times\mathbb{R}^{n+1},S^n\times\{y\}$ and $\{x\}\times S^n$, respectively.
Let $\Delta^xF,\Delta^yF,\widetilde{\Delta}^xF$ and $\widetilde{\Delta}^yF$
be the Laplacian operations on $\mathbb{R}^{n+1}\times\{y\},\{x\}\times\mathbb{R}^{n+1},S^n\times\{y\}$ and $\{x\}\times S^n$, respectively.

Since the restriction of $F$ to each section $S^n\times\{y\}$ (resp. $\{x\}\times S^n$) is of degree $r$ (resp.$s$),
by Theorem 3.30 in \cite{thebook}, we have
\begin{gather*}
|\widetilde{\mathrm{grad}}^xF|^2=|\mathrm{grad}^xF|^2-r^2F^2,\quad
|\widetilde{\mathrm{grad}}^yF|^2=|\mathrm{grad}^yF|^2-s^2F^2,\\
\widetilde{\Delta}^xF=\Delta^xF-(r-n+1)rF,\quad
\widetilde{\Delta}^yF=\Delta^yF-(s-n+1)sF,
\end{gather*}
and the lemma follows.
\end{proof}

By Lemma \ref{nablabi}, the restriction of a bi-homogeneous function $F$ on $\mathbb{R}^{n+1}\times\mathbb{R}^{n+1}$
to $S^n\times S^n$ is isoparametric if and only if $|\mathrm{grad}\,F|^2$ and $\Delta\,F$ both take
constant values on each level set of $F|_{S^n\times S^n}$. We also call such  bi-homogeneous functions isoparametric.

\subsection{Ricatti equation and curvature-adaptedness equation}\label{ODE}

In this subsection, we consider the movement of a hypersurface in a Riemannian manifold
along its unit normal vector field, and compute its variation formulae.
Let M be a hypersurface in a Riemannian manifold $N$, $F:M\times[0,\varepsilon)\to N$ such that
$F(x,0)=x$ and $F_*(\frac{\partial}{\partial t})|_{t=\tau}$ is the unit normal vector field to $M_\tau:=F(M\times\{\tau\})$.
By the theory of first-order PDE,there exists a unique such $F$.
Let $(x^1,\dots,x^n)$ be a local coordinate of $M$,
then there always holds $F_*(\frac{\partial}{\partial t})\perp F_*(\frac{\partial}{\partial x^i})$.
We abbreviate $F_*(\frac{\partial}{\partial t})$ and $F_*(\frac{\partial}{\partial x^i})$ to
$\frac{\partial}{\partial t}$ and $\frac{\partial}{\partial x^i}$, respectively.
Denote the Levi-Civita connection of $N$ by $\nabla$,
the Riemannian curvature tensor of $N$ by $R$,
and the second fundamental form of $M_t$ by $h_{ij}dx^i\otimes dx^j$, where
$h_{ij}=\langle\nabla_{\frac{\partial}{\partial x^i}}\frac{\partial}{\partial x^j},\frac{\partial}{\partial t}\rangle$.

From
\begin{gather*}
\langle\nabla_\frac{\partial}{\partial t}\frac{\partial}{\partial t},\frac{\partial}{\partial x^i}\rangle
=-\langle\frac{\partial}{\partial t},\nabla_\frac{\partial}{\partial t}\frac{\partial}{\partial x^i}\rangle
=-\langle\frac{\partial}{\partial t},\nabla_\frac{\partial}{\partial x^i}\frac{\partial}{\partial t}\rangle
=-\frac{1}{2}\frac{\partial}{\partial x^i}|\frac{\partial}{\partial t}|^2=0\\
\text{and }\langle\nabla_\frac{\partial}{\partial t}\frac{\partial}{\partial t},\frac{\partial}{\partial t}\rangle
=\frac{1}{2}\frac{\partial}{\partial t}|\frac{\partial}{\partial t}|^2=0,
\end{gather*}
we know that $\nabla_\frac{\partial}{\partial t}\frac{\partial}{\partial t}=0$,
which means that the normal distribution of a transnormal foliation is geodesic.
From
$\langle\frac{\partial}{\partial t},\nabla_\frac{\partial}{\partial x^i}\frac{\partial}{\partial t}\rangle=0$
and $\\ \langle\frac{\partial}{\partial x^j},\nabla_\frac{\partial}{\partial x^i}\frac{\partial}{\partial t}\rangle=-h_{ij}$
we know that $\nabla_\frac{\partial}{\partial t}\frac{\partial}{\partial x^i}=-h_{ik}g^{kl}\frac{\partial}{\partial x^l}$.
We compute the variation formulae
\begin{gather*}
\begin{aligned}
\frac{\partial g_{ij}}{\partial t}=&\frac{\partial}{\partial t}\langle\frac{\partial}{\partial x^i},\frac{\partial}{\partial x^j}\rangle\\
=&\langle\nabla_\frac{\partial}{\partial t}\frac{\partial}{\partial x^i},\frac{\partial}{\partial x^j}\rangle
+\langle\frac{\partial}{\partial x^i},\nabla_\frac{\partial}{\partial t}\frac{\partial}{\partial x^j}\rangle\\
=&\langle\nabla_\frac{\partial}{\partial x^i}\frac{\partial}{\partial t},\frac{\partial}{\partial x^j}\rangle
+\langle\frac{\partial}{\partial x^i},\nabla_\frac{\partial}{\partial x^j}\frac{\partial}{\partial t}\rangle\\
=&-\langle\frac{\partial}{\partial t},\nabla_\frac{\partial}{\partial x^i}\frac{\partial}{\partial x^j}\rangle
-\langle\nabla_\frac{\partial}{\partial x^j}\frac{\partial}{\partial x^i},\frac{\partial}{\partial t}\rangle\\
=&-2h_{ij},
\end{aligned}\\
0=\frac{\partial g^{ij}g_{jk}}{\partial t}=-2g^{ij}h_{jk}+\frac{\partial g^{ij}}{\partial t}g_{jk}
\Longrightarrow\frac{\partial g^{il}}{\partial t}=2g^{ij}h_{jk}g^{kl},\\
\begin{aligned}
\frac{\partial h_{ij}}{\partial t}=&\frac{\partial}{\partial t}
\langle\nabla_\frac{\partial}{\partial x^i}\frac{\partial}{\partial x^j},\frac{\partial}{\partial t}\rangle\\
=&\langle\nabla_\frac{\partial}{\partial t}
\nabla_\frac{\partial}{\partial x^i}\frac{\partial}{\partial x^j},\frac{\partial}{\partial t}\rangle\\
=&R_{0i0j}+\langle\nabla_\frac{\partial}{\partial x^i}
(\nabla_\frac{\partial}{\partial t}\frac{\partial}{\partial x^j}),\frac{\partial}{\partial t}\rangle\\
=&R_{0i0j}-\langle\nabla_\frac{\partial}{\partial t}\frac{\partial}{\partial x^i}
,\nabla_\frac{\partial}{\partial t}\frac{\partial}{\partial x^j}\rangle\\
=&R_{0i0j}-\langle-h_{jk}g^{kl}\frac{\partial}{\partial x^l}
,-h_{ip}g^{pq}\frac{\partial}{\partial x^q}\rangle\\
=&R_{0i0j}-h_{ik}g^{kl}h_{lj},
\end{aligned}\\
\begin{aligned}
\frac{\partial h^i_j}{\partial t}=&\frac{\partial g^{ik}h_{kj}}{\partial t}\\
=&2g^{il}h_{lm}g^{mk}h_{kj}+g^{ik}R_{0k0j}-g^{ik}h_{kl}g^{lm}h_{mj}\\
=&g^{ik}R_{0k0j}+g^{il}h_{lm}g^{mk}h_{kj}.
\end{aligned}
\end{gather*}
Define a $(1,1)$-tenser field $Q$ via $Q^i_j=g^{ik}R_{0k0j}$, which is called the Jacobi operator,
then we have the Riccati equation
\begin{equation}\label{Ricatti}
\frac{dA}{dt}=Q+A^2,
\end{equation}
where $A$ is the shape operator. If moreover $N$ is a locally symmetric space, i.e., $\nabla R=0$ on $N$, then we compute
\begin{gather*}
\frac{\partial R_{0i0j}}{\partial t}=\frac{\partial}{\partial t}
R(\frac{\partial}{\partial t},\frac{\partial}{\partial x^i},\frac{\partial}{\partial t},\frac{\partial}{\partial x^j})
=-h_{ik}g^{kl}R_{0l0j}-h_{jk}g^{kl}R_{0i0l},\\
\begin{aligned}
\frac{\partial Q^i_j}{\partial t}=&\frac{\partial g^{im}R_{0m0j}}{\partial t}\\
=&2g^{ik}h_{kl}g^{lm}R_{0l0j}-g^{im}h_{mk}g^{kl}R_{0l0j}-h_{jk}g^{kl}R_{0m0l}g^{im}\\
=&g^{im}h_{mk}g^{kl}R_{0l0j}-h_{jk}g^{kl}R_{0m0l}g^{im}\\
=&h^i_kQ^k_j-Q^i_lh^l_j,
\end{aligned}
\end{gather*}
more concisely,
\begin{equation}\label{curvatureadapted}
\frac{dQ}{dt}=[A,Q].
\end{equation}
We now have the ODE system (\ref{Ricatti})-(\ref{curvatureadapted}) of $A$ and $Q$,
which is independent of other variants.

Riccati equation is also studied in \cite{tubes,ge2015filtration} in other ways.

\subsection{Cartan-M\"{u}nzner polynomial}

In this subsection, we introduce several technical lemmas about Cartan-M\"{u}nzner polynomials.

A polynomial satisfying the Cartan-M\"{u}nzner differential equations
\begin{equation}\label{cartaneq}
\left\{\begin{aligned}
&|\mathrm{grad}\,F|^2=g^2|x|^{2g-2},\\
&\Delta\,F=c|x|^{g-2}
\end{aligned}\right.
\end{equation}
is called a Cartan-M\"{u}nzner polynomial.

By Theorem 3.32 in \cite{thebook} and its constructive proof, we have the following lemma.

\begin{lemma}[see Theorem 3.32 in \cite{thebook}]\label{CM}
Let $\mathcal{F}$ be an isoparametric foliation of $S^n$ with singular leaves $M_+$ and $M_-$,
$g$ the integer determined by $\mathrm{dist}(M_+,M_-)=\frac{\pi}{g}$,
$M_t$ the leaf of $\mathcal{F}$ with $\mathrm{dist}(M_t,M_+)=\frac{\pi}{2g}-t$ ,
or equivalently $\mathrm{dist}(M_t,M_-)=\frac{\pi}{2g}+t$, which is unique.
Let $F_0$ be a function on $S^n$ taking $\sin\frac{\pi t}{2}$ on $M_t$, then $F_0$ is a well-defined smooth function
and $F(x)=|x|^gF_0(\frac{x}{|x|})$ is the unique polynomial taking constant value on each leaf of $\mathcal{F}$,
and satisfying the Cartan-M\"{u}nzner equations with parameters $g$ and $c=g^2(m_--m_+)/2$.
\end{lemma}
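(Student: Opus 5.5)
The lemma is cited from Theorem 3.32 in \cite{thebook}; I would reconstruct it along Münzner's lines, using the leafwise distance function as the basic coordinate. Fix the focal submanifold $M_+$ and put $r(x)=\mathrm{dist}(x,M_+)$ on $S^n$. By Münzner's theory, the regular leaves have $g$ distinct constant principal curvatures $\cot(\theta_1+\frac{(\alpha-1)\pi}{g})$ with multiplicities $m_\alpha=m_{\alpha+2}$. The two focal submanifolds lie at distance $\frac{\pi}{g}$, and $r$ takes values in $[0,\frac{\pi}{g}]$, smooth away from $M_\pm$. I would set $\theta=g\,r$ and define $F_0=\cos\theta$ on $S^n$. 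This agrees with $\sin\frac{\pi t}{2}$ after the reparametrization built into the statement's normalization of $t$, since the leaves are indexed so that $M_t$ sits at distance $\frac{\pi}{2g}-t$ from $M_+$; the final function is characterized intrinsically as $\cos(g\,\mathrm{dist}(\cdot,M_+))$. Because $\cos$ is even about $0$ and $\pi$, $F_0$ is smooth across $M_\pm$; I would verify this by writing $F_0$ in normal exponential coordinates around $M_\pm$, where it is a smooth function of the squared normal radius.

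Next I would compute $|\widetilde{\mathrm{grad}}F_0|^2=g^2\sin^2\theta=g^2(1-F_0^2)$ and $\widetilde\Delta F_0=-g^2\cos\theta-g\sin\theta\,H(r)$. Here $H(r)=\sum_\alpha m_\alpha\cot(r+\frac{(\alpha-1)\pi}{g})$ is the mean curvature of the leaf through $x$. The key trigonometric identity, using the alternation $m_\alpha=m_{\alpha+2}$ and summing over the two classes, is that $\sum\cot(r+\frac{k\pi}{g})$ over $g/2$ shifts equals a multiple of $\cot(\frac{g r}{2})$- or $\tan$-type terms. This gives $\sin\theta\, H=\frac{g}{2}\big((m_++m_-)\cos\theta+(m_--m_+)\big)$ up to sign conventions. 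Hence $\widetilde\Delta F_0=-g(g+n-1)F_0+\frac{g^2}{2}(m_--m_+)$, using $n-1=\frac{g}{2}(m_++m_-)$.

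Then I would extend homogeneously, $F(x)=|x|^gF_0(x/|x|)$, and use the converse of Lemma \ref{nablabi} for a single sphere, i.e. the same formulas from \cite{thebook}. These give $|\mathrm{grad}F|^2=|x|^{2g-2}\big(|\widetilde{\mathrm{grad}}F_0|^2+g^2F_0^2\big)=g^2|x|^{2g-2}$ and $\Delta F=|x|^{g-2}\big(\widetilde\Delta F_0+g(g+n-1)F_0\big)=c|x|^{g-2}$ with $c=g^2(m_--m_+)/2$. So $F$ satisfies (\ref{cartaneq}) on $\mathbb{R}^{n+1}\setminus\{0\}$.

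The main obstacle is proving $F$ is a polynomial, not merely smooth away from $0$ and homogeneous. For this I would follow Münzner: $\Delta F=c|x|^{g-2}$ with $g$ even (for $g\in\{2,4,6\}$) makes $\Delta^{g/2}\Delta F$ vanish, so $F$ is polyharmonic off the origin and homogeneous. A homogeneous function of degree $g$ that is $C^{g}$-bounded near $0$ and polyharmonic must be a polynomial, after removing the singularity by a distributional argument. For odd $g$ (that is, $g=1,3$) I would instead apply iterated Laplacians and note that $\Delta^{k}F$ is homogeneous of negative degree yet locally bounded, hence zero. Uniqueness is then immediate. Any polynomial constant on leaves solving (\ref{cartaneq}) restricts to a function $\phi(r)$ with $|\phi'|=g\sqrt{1-\phi^2}$, whose solutions with the required boundary values at $M_\pm$ are forced to be $\cos(g r)$ up to the sign fixed by the choice of $M_+$; homogeneity then determines $F$.
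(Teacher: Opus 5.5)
The paper gives no proof here beyond citing M\"{u}nzner's construction in \cite{thebook}, and your outline reproduces that construction: $F_0=\cos(g\,\mathrm{dist}(\cdot,M_+))$, smoothness across $M_\pm$ by evenness in the normal radius, $|\widetilde{\mathrm{grad}}\,F_0|^2=g^2(1-F_0^2)$ and $\widetilde\Delta F_0=-g(g+n-1)F_0+c$ from the cotangent sums, and then the homogeneous extension. One sign slip: with $H=\widetilde\Delta r$ as you define it, the constant term in your intermediate identity should be $\frac g2(m_+-m_-)$, but your final formula for $\widetilde\Delta F_0$ is correct. The even-$g$ polynomiality step is sound. $\Delta^{g/2}F$ is constant off the origin and involves only derivatives of order $g$, which are homogeneous of degree $0$ and hence bounded. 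So the identity holds distributionally on $\mathbb{R}^{n+1}$, and hypoellipticity plus homogeneity give a polynomial.

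The odd-$g$ step fails as written. You claim $\Delta^kF$ is homogeneous of negative degree ``yet locally bounded, hence zero'', but nothing gives local boundedness. For odd $g$ the function $c|x|^{g-2}$ is not a polynomial, and if $c\neq0$ then $\Delta^{(g+1)/2}F$ is a nonzero multiple of $|x|^{-1}$; for $g=3$, $\Delta F=c|x|$ and $\Delta^2F=cn|x|^{-1}$.

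The missing point is that $c=0$ when $g$ is odd: $m_\alpha=m_{\alpha+2}$ with $g$ odd forces all multiplicities to coincide, so $m_+=m_-$. Your Laplacian computation needs this in the odd case too, where the identity is $\sum_{k=0}^{g-1}\cot(r+\frac{k\pi}{g})=g\cot(gr)$ with a single multiplicity, not a two-class sum. Then $\Delta F=0$ off the origin with $\nabla F$ bounded near $0$, so $F$ is weakly harmonic on $\mathbb{R}^{n+1}$, hence smooth, hence a harmonic homogeneous polynomial.

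A uniform finish that avoids distributions: $F_0-\frac{c}{g(g+n-1)}$ is a smooth eigenfunction of $\widetilde\Delta$ with eigenvalue $-g(g+n-1)$. It is therefore the restriction of a harmonic homogeneous polynomial $h$ of degree $g$, so $F=h+\frac{c}{g(g+n-1)}|x|^g$, which is a polynomial because $g$ is even or $c=0$.

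Finally, $\cos(g\,\mathrm{dist}(\cdot,M_+))$ does not agree with $\sin\frac{\pi t}{2}$ for the statement's parameter. Since $\mathrm{dist}(M_t,M_+)=\frac{\pi}{2g}-t$, the value is $\sin(gt)$; $\sin\frac{\pi t}{2}$ is correct only after rescaling $t$ to $[-1,1]$ as in Lemma \ref{isoparabebihomo}. You are proving the correctly normalized statement and should say so explicitly.
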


We say $F$ is the Cartan-M\"{u}nzner polynoimal of $\mathcal{F}$, or Cartan-M\"{u}nzner polynoimal of $M_t$.

\subsection{Representations of Clifford algebras}\label{repofclifford}

In this subsection, we introduce the Clifford algebras $Cl_m$ and $Cl_m^*$,
and give a representation of each $Cl_m$ and a representation of each $Cl_m^*$.
Representations in this section are real representations. 
Results in this subsection can be found in \cite{spingeometry}.

For convenience, we define
$$I:=\begin{bmatrix}1&\\&1\end{bmatrix},\quad E:=\begin{bmatrix}1&\\&-1\end{bmatrix},\quad
F:=\begin{bmatrix}&1\\1&\end{bmatrix},\quad G:=\begin{bmatrix}&1\\-1&\end{bmatrix}.$$

The Kronecker product of matrices is given in the manner of
$$\begin{bmatrix}a_{11}&\cdots&a_{1n}\\ \vdots&\vdots&\vdots\\a_{11}&\cdots&a_{1n}\end{bmatrix}\otimes A
=\begin{bmatrix}a_{11}A&\cdots&a_{1n}A\\ \vdots&\vdots&\vdots\\a_{11}A&\cdots&a_{1n}A\end{bmatrix}.$$

A Clifford algebra can be defined by $(\bigoplus_{r=0}^\infty\oplus^r\mathbb{R}^m)/\sim$,
where $\sim$ is an equivalence relation given by $v\cdot v=-q(v)1$, with $q$ a quadratic form defined on $\mathrm{R}^m$.
In particular, when $q$ is positive-definite (resp. negative-definite),
in which case we can write $q(v)=|v|^2$ (resp. $q(v)=-|v|^2$),
the Clifford algebra is denoted by $Cl_m$ (resp. $Cl_m^*$).

Up to equivalence, $Cl_m$ (resp. $Cl_m^*$) has one irreducible representation when $m\nequiv3,7\mod8$ (resp. $m\nequiv1,5\mod8$),
two irreducible representations of the same dimension when $m\equiv3,7\mod8$ (resp. $m\equiv1,5\mod8$).
Let $\delta(m)$ and $\delta^*(m)$ denote the dimension of an irreducible representation of $Cl_m$ and $Cl_m^*$, respectively.
Then we have tables
\begin{table}[H]
\centering
\begin{tabular}{|c|c|c|c|c|c|c|c|c|c|}
  \hline
  $m$&$1$&$2$&$3$&$4$&$5$&$6$&$7$&$8$&$m+8$\\
  \hline
  $\delta(m)$&$2$&$4$&$4$&$8$&$8$&$8$&$8$&$16$&$16\delta(m)$\\
  \hline
  \hline
  $m$&$1$&$2$&$3$&$4$&$5$&$6$&$7$&$8$&$m+8$\\
  \hline
  $\delta^*(m)$&$1$&$2$&$4$&$8$&$8$&$16$&$16$&$16$&$16\delta^*(m)$\\
  \hline
\end{tabular}
\end{table}

There are isomorphisms of algebras,
\begin{gather*}
\begin{aligned}
Cl_{m+2}^*\cong\,& Cl_2^*\otimes Cl_m,\\
e_1\leftrightarrow& e_1\otimes1,\\
e_2\leftrightarrow& e_2\otimes1,\\
e_{i+2}\leftrightarrow& e_1e_2\otimes e_i,
\end{aligned}
\quad\quad
\begin{aligned}
Cl_{m+2}\cong\,& Cl_m^*\otimes Cl_2,\\
e_1\leftrightarrow&1\otimes e_1,\\
e_2\leftrightarrow&1\otimes e_2,\\
e_{i+2}\leftrightarrow& e_i\otimes e_1e_2.
\end{aligned}
\end{gather*}
where $(e_1,\dots,e_m)$ is an orthonormal basis of $\mathbb{R}^{m}$ with respect to $q$.
With those isomorphisms and irreducible representations of $Cl_1$, $Cl_2$, $Cl_1^*$ and $Cl_2^*$ given in advance,
we can inductively obtain irreducible representations $\phi_m$ of $Cl_m$
and irreducible representations $\psi_m$ of $Cl_m^*$ for any positive integer $m$.
The constructions is given in the following tables.
\begin{table}[H]
\centering
\caption{Representations of $Cl_m$}
\label{tableeg1}
\begin{tabular}{|c|l|}
  \hline
  Algebra & Representation \\
  \hline
  $Cl_1$ & $e_1\mapsto G$ \\
  \hline
  $Cl_2$ & $e_1\mapsto E\otimes G$, $e_2\mapsto F\otimes G$ \\
  \hline
  $Cl_3$ & $e_1\mapsto E\otimes G$, $e_2\mapsto F\otimes G$, \\
         & $e_3\mapsto G\otimes I$ \\
  \hline
  $Cl_4$ & $e_1\mapsto I\otimes E\otimes G$, $e_2\mapsto I\otimes F\otimes G$, \\
         & $e_3\mapsto E\otimes G\otimes I$, $e_4\mapsto F\otimes G\otimes I$ \\
  \hline
  $Cl_5$ & $e_1\mapsto I\otimes E\otimes G$, $e_2\mapsto I\otimes F\otimes G$, \\
         & $e_3\mapsto E\otimes G\otimes I$, $e_4\mapsto F\otimes G\otimes I$, \\
         & $e_5\mapsto G\otimes G\otimes G$ \\
  \hline
  $Cl_6$ & $e_1\mapsto I\otimes E\otimes G$, $e_2\mapsto I\otimes F\otimes G$, \\
         & $e_3\mapsto E\otimes G\otimes I$, $e_4\mapsto F\otimes G\otimes I$, \\
         & $e_5\mapsto G\otimes I\otimes E$, $e_6\mapsto G\otimes I\otimes F$ \\
  \hline
  $Cl_7$ & $e_1\mapsto I\otimes E\otimes G$, $e_2\mapsto I\otimes F\otimes G$, \\
         & $e_3\mapsto E\otimes G\otimes I$, $e_4\mapsto F\otimes G\otimes I$, \\
         & $e_5\mapsto G\otimes I\otimes E$, $e_6\mapsto G\otimes I\otimes F$, \\
         & $e_7\mapsto G\otimes G\otimes G$ \\
  \hline
\end{tabular}
\end{table}
\begin{table}[H]
\centering
\begin{tabular}{|c|l|}
  \hline
  $Cl_8$ & $e_1\mapsto I\otimes I\otimes E\otimes G$, $e_2\mapsto I\otimes I\otimes F\otimes G$, \\
         & $e_3\mapsto E\otimes I\otimes G\otimes I$, $e_4\mapsto F\otimes I\otimes G\otimes I$, \\
         & $e_5\mapsto G\otimes I\otimes I\otimes E$, $e_6\mapsto G\otimes I\otimes I\otimes F$, \\
         & $e_7\mapsto G\otimes E\otimes G\otimes G$, $e_8\mapsto G\otimes F\otimes G\otimes G$ \\
  \hline
  $Cl_{m+8}$ & $e_i\mapsto\psi_8(e_i)\otimes I_{\delta(m)}$, $1\le i\le8$,\\
             & $e_j\mapsto G\otimes G\otimes G\otimes G\otimes\psi_m(e_{j-8})$, $9\le j\le m+8$\\
  \hline
\end{tabular}
\end{table}
\begin{table}[H]
\centering
\caption{Representations of $Cl_m^*$}
\label{tableeg2}
\begin{tabular}{|c|l|}
  \hline
  Algebra & Representation \\
  \hline
  $Cl_1^*$ & $e_1\mapsto 1$ \\
  \hline
  $Cl_2^*$ & $e_1\mapsto E$, $e_2\mapsto F$ \\
  \hline
  $Cl_3^*$ & $e_1\mapsto E\otimes I$, $e_2\mapsto F\otimes I$, \\
           & $e_3\mapsto G\otimes G$ \\
  \hline
  $Cl_4^*$ & $e_1\mapsto E\otimes I\otimes I$, $e_2\mapsto F\otimes I\otimes I$, \\
           & $e_3\mapsto G\otimes E\otimes G$, $e_4\mapsto G\otimes F\otimes G$ \\
  \hline
  $Cl_5^*$ & $e_1\mapsto E\otimes I\otimes I$, $e_2\mapsto F\otimes I\otimes I$, \\
           & $e_3\mapsto G\otimes E\otimes G$, $e_4\mapsto G\otimes F\otimes G$, \\
           & $e_5\mapsto G\otimes G\otimes I$ \\
  \hline
  $Cl_6^*$ & $e_1\mapsto E\otimes I\otimes I\otimes I$, $e_2\mapsto F\otimes I\otimes I\otimes I$, \\
           & $e_3\mapsto G\otimes I\otimes E\otimes G$, $e_4\mapsto G\otimes I\otimes F\otimes G$, \\
           & $e_5\mapsto G\otimes E\otimes G\otimes I$, $e_6\mapsto G\otimes F\otimes G\otimes I$ \\
  \hline
  $Cl_7^*$ & $e_1\mapsto E\otimes I\otimes I\otimes I$, $e_2\mapsto F\otimes I\otimes I\otimes I$, \\
           & $e_3\mapsto G\otimes I\otimes E\otimes G$, $e_4\mapsto G\otimes I\otimes F\otimes G$, \\
           & $e_5\mapsto G\otimes E\otimes G\otimes I$, $e_6\mapsto G\otimes F\otimes G\otimes I$, \\
           & $e_7\mapsto G\otimes G\otimes G\otimes G$ \\
  \hline
  $Cl_8^*$ & $e_1\mapsto E\otimes I\otimes I\otimes I$, $e_2\mapsto F\otimes I\otimes I\otimes I$, \\
           & $e_3\mapsto G\otimes I\otimes E\otimes G$, $e_4\mapsto G\otimes I\otimes F\otimes G$, \\
           & $e_5\mapsto G\otimes E\otimes G\otimes I$, $e_6\mapsto G\otimes F\otimes G\otimes I$, \\
           & $e_7\mapsto G\otimes G\otimes I\otimes E$, $e_8\mapsto G\otimes G\otimes I\otimes F$ \\
  \hline
  $Cl_{m+8}$ & $e_i\mapsto\psi_m(e_i)\otimes G\otimes G\otimes G\otimes G$, $1\le i\le m$,\\
             & $e_j\mapsto I_{\delta^*(m)}\otimes\psi_8(e_{j-m})$, $m+1\le j\le m+8$\\
  \hline
\end{tabular}
\end{table}

The product of two complex numbers, two quaternions or two octonions can be represented by Clifford algebras.
More precisely,
$$(x^0-x^1\mathrm{i})(y^0+y^1\mathrm{i})=(x^0y^0+x^1y^1)+(x^0y^1-x^1y^0)\mathrm{i}
=\langle x,y\rangle^2+\langle x,E_1y\rangle^2,$$
where $E_1=\phi_1(e_1)$ in Table \ref{tableeg1};
$$\begin{aligned}
&(x^0-x^1\mathrm{i}-x^2\mathrm{j}-x^3\mathrm{k})(y^0+y^1\mathrm{i}+y^2\mathrm{j}+y^3\mathrm{k})\\
=&(x^0y^0+x^1y^1+x^2y^2+x^3y^3)+(x^0y^1-x^1y^0-x^2y^3+x^3y^2)\mathrm{i}\\
&+(x^0y^2+x^1y^3-x^2y^0-x^3y^1)\mathrm{j}+(x^0y^3-x^1y^2+x^2y^1-x^3y^0)\mathrm{k}\\
=&\langle x,y\rangle^2+\langle x,E_1y\rangle^2+\langle x,E_3y\rangle^2+\langle x,E_2y\rangle^2,
\end{aligned}$$
where $E_i=\phi_3(e_i)$ in Table \ref{tableeg1}, $1\le i\le3$;
$$\begin{aligned}
&(x^0-\sum_{i=1}^7x^ie_i)(y^0+\sum_{i=1}^7y^ie_i)\\
=&(x^0y^0+x^1y^1+x^2y^2+x^3y^3+x^4y^4+x^5y^5+x^6y^6+x^7y^7)\\
&+(x^0y^1-x^1y^0-x^2y^3+x^3y^2+x^4y^5-x^5y^4-x^6y^7+x^7y^6)e_1\\
&+(x^0y^2+x^1y^3-x^2y^0-x^3y^1-x^4y^6-x^5y^7+x^6y^4+x^7y^5)e_2\\
&+(x^0y^3-x^1y^2+x^2y^1-x^3y^0+x^4y^7-x^5y^6+x^6y^5-x^7y^4)e_3\\
&+(x^0y^4-x^1y^5+x^2y^6-x^3y^7-x^4y^0+x^5y^1-x^6y^2+x^7y^3)e_4\\
&+(x^0y^5+x^1y^4+x^2y^7+x^3y^6-x^4y^1-x^5y^0-x^6y^3-x^7y^2)e_5\\
&+(x^0y^6+x^1y^7-x^2y^4-x^3y^5+x^4y^2+x^5y^3-x^6y^0-x^7y^1)e_6\\
&+(x^0y^7-x^1y^6-x^2y^5+x^3y^4-x^4y^3+x^5y^2+x^6y^1-x^7y^0)e_7\\
=&\langle x,y\rangle^2+\langle x,E_1y\rangle^2+\langle x,E_3y\rangle^2+\langle x,E_2y\rangle^2\\
&+\langle x,E_5y\rangle^2+\langle x,E_6y\rangle^2+\langle x,E_4y\rangle^2+\langle x,E_7y\rangle^2,
\end{aligned}$$
where $E_i=\phi_7(e_i)$ in Table \ref{tableeg1}, $1\le i\le7$, and the product of octonions follows the laws
\begin{itemize}
\item $(e_i)^2=-1$, for $1\le i\le7$;
\item if $e_ie_j=-e_je_i$, for $1\le i,j\le7$ with $i\ne j$;
\item if $e_ie_j=e_k$, then $e_je_k=e_i$ and $e_ke_i=e_j$, for $1\le i,j\le7$ with $i\ne j$;
\item $e_1e_2=e_3$, $e_1e_5=e_4$, $e_1e_6=e_7$, $e_2e_4=e_6$,
$e_2e_5=e_7$, $e_4e_3=e_7$, $e_6e_3=e_5$. 
\end{itemize}

\section{Examples of Isoparametric Functions in $S^n\times S^n$}\label{eg}

\subsection{Examples of bi-degree $(1,1)$}\label{egr}

Let $F_r:\mathbb{R}^{n+1}\times\mathbb{R}^{n+1}\to\mathbb{R},\,F_r(x,y)=\langle x,y\rangle=\sum_{i=0}^nx^iy^i$,
where $n\ge2$, then by direct computation we obtain
\begin{gather*}
\mathrm{grad}\,F_r=(y^0,\dots,y^n,x^0,\dots,x^n),\\
|\mathrm{grad}\,F_r|^2=|x|^2+|y|^2,\\
\Delta\,F_r=0.
\end{gather*}
Therefore, $F_r|_{S^n\times S^n}$ is isoparametric.

This family of isoparametric functions on $S^n\times S^n$ is first proposed by Qian and Tang \cite{qian2016isoparametric},
and Urbano \cite{urbano2016S2S2}.

\subsection{Examples arisen from products of $\mathbb{C}$,$\mathbb{H}$,$\mathbb{O}$}\label{egcho}

Let $F_c:\mathbb{C}^{n+1}\times\mathbb{C}^{n+1}\to\mathbb{R}$, $F_c(z,w)=|\sum_{i=0}^n\overline{z^i}w^i|^2$, where $n\ge2$. We compute
$$\frac{\partial F_c}{\partial z^i}=\overline{w^i}(\sum_{i=0}^n\overline{z^i}w^i),\quad
\frac{\partial F_c}{\partial w^i}=\overline{z^i}(\sum_{i=0}^nz^i\overline{w^i}).$$
Since $|\frac{\partial F_c}{\partial z^i}|=|\frac{\partial\overline{F_c}}{\partial z^i}|
=|\overline{(\frac{\partial F_c}{\partial\overline{z^i}})}|=|\frac{\partial F_c}{\partial\overline{z^i}}|$,
and similarly $|\frac{\partial F_c}{\partial w^i}|=|\frac{\partial F_c}{\partial\overline{w^i}}|$,
$$\begin{aligned}|\mathrm{grad}\,F_c|^2
=&2\sum_{i=0}^n(|\frac{\partial F_c}{\partial z^i}|^2+|\frac{\partial F_c}{\partial\overline{z^i}}|^2
+|\frac{\partial F_c}{\partial w^i}|^2+|\frac{\partial F_c}{\partial\overline{w^i}}|^2)\\
=&4\sum_{i=0}^n(|\frac{\partial F_c}{\partial z^i}|^2+|\frac{\partial F_c}{\partial w^i}|^2)\\
=&4\sum_{i=0}^n(|w^i|^2F_c+|z^i|^2F_c)\\
=&4(|z^2|+|w|^2)F_c.
\end{aligned}$$
We also compute,
$$\Delta\,F_c=4\sum_{i=0}^n(\frac{\partial^2F_c}{\partial z^i\partial\overline{z^i}}+\frac{\partial^2F_c}{\partial w^i\partial\overline{w^i}})
=4(|z|^2+|w|^2).$$
Therefore, $F_c|_{S^{2n+1}\times S^{2n+1}}$ is isoparametric.

We also have similar examples in quaternion and octonion cases,
\begin{gather*}
F_h:\mathbb{H}^{n+1}\times\mathbb{H}^{n+1}\to\mathbb{R},\quad F_h(p,q)=|\sum_{l=0}^n(p^l)^*q^l|^2,\\
F_o:\mathbb{O}^{n+1}\times\mathbb{O}^{n+1}\to\mathbb{R},\quad F_o(P,Q)=|\sum_{l=0}^n(P^l)^*Q^l|^2,
\end{gather*}
where $n\ge2$, which can be verified to be isoparametric for $S^{4n+3}\times S^{4n+3}$ and $S^{8n+7}\times S^{8n+7}$, respectively,
in the same way as above.

\subsection{Clifford-type examples}\label{exampleclifford}

Representations in this section are real representations. 

Let $(e_1,\dots,e_{m-1})$ be an orthonormal basis of $\mathbb{R}^{m-1}$, $\phi$ a representation of $Cl_{m-1}$,
$E_i=\phi(e_i)$, $i=1,\dots,m-1$. We define
\begin{equation*}
F(x,y)=\langle x,y\rangle^2+\sum_{i=1}^{m-1}\langle x,E_iy\rangle^2,
\end{equation*}
which is labeled by (\ref{cuieg}) in the Theorem.
We compute, 
\begin{gather*}
\mathrm{grad}\,F=2\langle x,y\rangle(y,x)
+\sum_{i=1}^{m-1}(2\langle x,E_iy\rangle(E_iy,0)+2\langle E_ix,y\rangle(0,E_ix)),\\
|\mathrm{grad}\,F|^2=4\langle x,y\rangle^2(|x|^2+|y|^2)+\sum_{i=1}^{m-1}4\langle x,E_iy\rangle^2(|x|^2+|y|^2),\\
\frac{\partial^2 F}{(\partial x^j)^2}=2y^jy^j+2\sum_{i=1}^{m-1}(E_iy)^j(E_iy)^j,\\
\Delta^xF=2m|y|^2,\quad\Delta^yF=2m|x|^2,\\
\Delta\,F=2m(|x|^2+|y|^2),
\end{gather*}
where in the computation of the second equation,
we have used the fact that $\langle E_ix,E_j x\rangle=0$ when $i\ne j$.
Therefore, $F|_{S^n\times S^n}$ is isoparametric.

When $m\equiv3,7\mod8$, $Cl_m$ has two irreducible representations $\phi_m$ and $\phi_m'$ up to equivalence,
and $\phi_m'$ can be given by $\phi_m'(e_i)=-\phi_m(e_i)$.
Since $\langle x,\phi(e_i)y\rangle^2=\langle x,-\phi(e_i)y\rangle^2$, $F$ defined by $k\phi_m+l\phi_m'$
equals $F$ defined by $(k+l)\phi_m$. Therefore, for each $(m,k)$, there is one and only one function
of the form (\ref{cuieg}) defined by  a representation of $Cl_{m-1}$ with $k$ irreducible summands.

Using the Clifford representation of products of complex numbers, quaternions, and octonions in Subsection \ref{repofclifford},
we find that
\begin{gather*}
F_c(x,y)=\langle x,y\rangle^2+\langle x,E_1y\rangle^2,\\
F_q(x,y)=\langle x,y\rangle^2+\sum_{i=1}^3\langle x,E_iy\rangle^2,\\
F_o(x,y)=\langle x,y\rangle^2+\sum_{i=1}^7\langle x,E_iy\rangle^2,
\end{gather*}
where $E_i=\phi(e_i)$, $\phi$ is a reducible representation of $Cl_1$,$Cl_3$ and $Cl_7$, respectively.
This corresponding is suggested by Cui after his discovery of formula (\ref{cuieg}).

There are also equivalence of (3) and other isoparametric functions on $S^n\times S^n$.
When $(m,n)=(2,1),(4,3)$ or $(8,7)$,
$F(x,y)=|x\cdot y|^2=|x|^2|y|^2,$
trivial on $S^n\times S^n$.
When $m=1$,
$F(x,y)=\langle x,y\rangle^2,$
determine the same foliation as $F_r$ does.
When $(m,n)=(3,3)$ or $(7,7)$,
$F(x,y)=|x\cdot y|^2-\langle x,y\rangle^2=|x|^2|y|^2-\langle x,y\rangle^2,$
also determine the same foliation as $F_r$ does.

There is another equivalent expression of this family of examples, which involves representations of $Cl_m^*$.
\begin{gather}
F^*(x,y)=\sum_{i=1}^m\langle P_ix,y\rangle^2,\text{ when }m\equiv1,2,3,5\mod 8, \label{fulleg}\\
F^*(x,y)=\sum_{i=1}^m(X^tP_iX)^2,\text{ when }m\equiv4,6,7,8\mod 8,\label{notfulleg}
\end{gather}
where $\psi:e_i\mapsto P_i$, $1\le i\le m$, is a representation of $Cl_m^*$,
$X=\begin{bmatrix}x\\y\end{bmatrix}\in\mathbb{R}^{n+1}\times\mathbb{R}^{n+1}$,
and $P_i$ in (\ref{notfulleg}) are selected to be QSD as in the proof of Lemma \ref{QSD}.

The examples (\ref{fulleg}) can be rewritten in the form
$$F^*(x,y)=\sum_{i=1}^m(X^t\begin{bmatrix}&P_i\\P_i&\end{bmatrix}X)^2,$$
which is congruent to
$$F^*(x,y)=\sum_{i=1}^m(X^t\begin{bmatrix}P_i&\\&-P_i\end{bmatrix}X)^2,$$
with respect to $S^{2n+1}$.

When $m\equiv2,3\mod8$, $Cl_m^*$ has one irreducible representation $\psi_m$ up to equivalence,
and the representation given by $e_i\mapsto\psi_m(e_i)$ and $e_i\mapsto-\psi_m(e_i)$ are equivalent.
If $\psi\simeq k\psi_m$, then $F^*$ is congruent to
\begin{equation}\label{FKMexpress1}
F':=\sum_{i=1}^m(X^t(I_{2k}\otimes\psi_m(e_i))X)^2,
\end{equation}
which is an FKM-type isoparametric polynomial for $S^{2n+1}$.

When $m\equiv1,5\mod8$, $Cl_m^*$ has two irreducible representations $\psi_m$ and $\psi_m'$ up to equivalence,
and $\psi_m'$ can be given by $\psi_m'(e_i)=-\psi_m(e_i)$.
If $\psi\sim k\psi_m+l\psi_m'$, then $F^*$ is congruent to
$$\sum_{i=1}^m(X^t\mathrm{diag}(I_k\otimes\psi_m(e_i),-I_l\otimes\psi_m(e_i),-I_k\otimes\psi_m(e_i),I_l\otimes\psi_m(e_i))X)^2,$$
and then congruent to
\begin{equation}\label{FKMexpress2}
\sum_{i=1}^m(X^t\begin{bmatrix}I_{k+l}\otimes\psi_m(e_i)&\\&I_{k+l}\otimes\psi'_m(e_i)\end{bmatrix}X)^2=:F',
\end{equation}
which is an FKM-type isoparametric polynomial for $S^{2n+1}$.

Therefore, for each $(m,k)$ there is one and only one function
of the form (\ref{fulleg}) or (\ref{notfulleg}) defined by  a representation of $Cl_m^*$ with $k$ irreducible summands.

The FKM-type isoparametric polynomial $F'$ in (\ref{FKMexpress1}) and (\ref{FKMexpress2}) satisfies
$$|\mathrm{grad}\,F'|^2=4(|x|^2+|y|^2)F',\quad\Delta\,F'=c(|x|^2+|y|^2).$$
Hence $F^*$ in (\ref{fulleg}) and (\ref{notfulleg}) also satisfies
$$|\mathrm{grad}\,F^*|^2=4(|x|^2+|y|^2)F^*,\quad\Delta\,F^*=c(|x|^2+|y|^2).$$
Therefore, the two families of polynomials (\ref{fulleg}) and (\ref{notfulleg}) are isoparametric for $S^n\times S^n$.

\subsection{Examples on $S^7\times S^7$}

Let $M:S^7\times S^7\to S^7$, $(x,y)\to x^*y$ be given by the multiplication of octonions.
If $f:S^7\to \mathbb{R}$ is an isomoparametric function on $S^7$, then $f\circ M$ is an isomoparametric function on $S^7\times S^7$.
This follows from the following lemma.

\begin{lemma}\label{Riesubm}
Let $\pi:M\to N$ be a fibre bundle with $M$ and $N$ endowed with metrics making $\pi$ a Riemannian submersion
and every fibre a minimal submanifold of $M$. Let $F$ be a function on $N$,
then $F$ is an isoparametric function on $N$ if and only if $\pi^*F$ is an isoparametric funtinon on $M$.
\end{lemma}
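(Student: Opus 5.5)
The plan is to compare the gradient and the Laplacian of $F$ on $N$ with those of $\pi^*F$ on $M$ pointwise. Everything then reduces to the two standard identities for a Riemannian submersion with minimal fibres:
\begin{equation*}
|\mathrm{grad}\,(\pi^*F)|^2=\pi^*|\mathrm{grad}\,F|^2,\qquad \Delta_M(\pi^*F)=\pi^*(\Delta_N F).
\end{equation*}

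\textbf{Gradient.} Since $\pi^*F$ is constant along fibres, its gradient at $p$ is horizontal. Because $d\pi_p$ restricts to an isometry from the horizontal space onto $T_{\pi(p)}N$, the gradient is the horizontal lift of $\mathrm{grad}\,F$ at $\pi(p)$. This gives the first identity.

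\textbf{Laplacian.} Take a local orthonormal frame $\{X_a\}$ of $N$ near $\pi(p)$. Let $\{\widetilde X_a\}$ be its horizontal lifts and complete them with a local orthonormal vertical frame $\{V_k\}$ near $p$. Write $\widetilde F=\pi^*F$, so $\widetilde F$ is fibrewise constant.
\begin{itemize}
\item[1.] For a basic horizontal field, $\widetilde X_a\widetilde X_a\widetilde F=\pi^*(X_aX_aF)$.
\item[2.] The horizontal part of $\nabla^M_{\widetilde X_a}\widetilde X_a$ is the lift of $\nabla^N_{X_a}X_a$ (O'Neill). So the horizontal terms of the Hessian trace give exactly $\pi^*(\Delta_N F)$.
\item[3.] The vertical terms are $V_kV_k\widetilde F-(\nabla_{V_k}V_k)\widetilde F$. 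The first vanishes because $\widetilde F$ is fibrewise constant. Since $\mathrm{grad}\,\widetilde F$ is horizontal, the second equals $-(\nabla_{V_k}V_k)^{\mathcal H}\widetilde F$.
\item[4.] Summing over $k$ gives $-\langle H,\mathrm{grad}\,\widetilde F\rangle$, where $H$ is the mean curvature vector of the fibre. Minimality gives $H=0$.
\end{itemize}
Hence $\Delta_M\widetilde F=\pi^*(\Delta_NF)-\langle H,\mathrm{grad}\,\widetilde F\rangle=\pi^*(\Delta_NF)$, which is the second identity.

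\textbf{The equivalence.} If $|\mathrm{grad}\,F|^2=a(F)$ and $\Delta F=b(F)$, then pulling back gives $|\mathrm{grad}\,\widetilde F|^2=a(\widetilde F)$ and $\Delta\widetilde F=b(\widetilde F)$. Conversely, suppose $|\mathrm{grad}\,\widetilde F|^2=a(\widetilde F)$ and $\Delta\widetilde F=b(\widetilde F)$. By the two identities the left sides equal $\pi^*(|\mathrm{grad}\,F|^2)$ and $\pi^*(\Delta F)$. Since $\pi$ is surjective (it is a fibre bundle), $|\mathrm{grad}\,F|^2=a(F)$ and $\Delta F=b(F)$ hold everywhere on $N$.

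\textbf{Main obstacle.} The delicate point is step 2, the horizontal part of $\nabla_{\widetilde X_a}\widetilde X_a$. I would use O'Neill's formula $\nabla^M_{\widetilde X}\widetilde Y=\widetilde{\nabla^N_XY}+\tfrac12[\widetilde X,\widetilde Y]^{\mathcal V}$, which is verified from Koszul's formula using that basic fields are $\pi$-related.

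For the application that follows, it remains to check that $(x,y)\mapsto x^*y$ is a Riemannian submersion with minimal fibres. The fibres are orbits of an isometric action of $\mathrm{Spin}(7)$-type triality, or equivalently the graphs $\{(x,xz)\}$, which are totally geodesic copies of $S^7$. This check is deferred to the application and is not part of the lemma itself.
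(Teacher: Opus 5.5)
Your proposal is correct and takes essentially the same route as the paper. Both compare $|\mathrm{grad}|^2$ and $\Delta$ pointwise using a frame split into vertical fields and basic horizontal lifts; the submersion identity $(\nabla^M_{\widetilde X}\widetilde X)^{\mathcal H}=\widetilde{\nabla^N_XX}$ handles the horizontal terms, and minimality of the fibres kills the leftover term $\langle H,\mathrm{grad}\,\pi^*F\rangle$. You also make explicit that surjectivity of $\pi$ is what gives the converse direction, which the paper leaves implicit.
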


\begin{proof}
We prove the lemma by proving that for any $p\in M$,
$$|\nabla^MF|^2|_p=|\nabla^NF|^2|_{\pi(p)},\quad\Delta^MF|_p=\Delta^NF|_{\pi(p)},$$
where by abuse of notation we denote $\pi^*F$ by $F$.
Let $\mathscr{K}$ be the tangent distribution of the fibres.
Let $(X_1,\dots,X_n)$ be a local orthonormal frame of $M$ in a neighbourhood of $p$,
such that $X_1,\dots,X_k\in\mathscr{K}$ and that $\pi_*(X_{k+1}),\dots,\pi_*(X_n)$ is well-defined.
Then $\pi_*(X_{k+1}),\dots,\pi_*(X_n)$ is a local orthogonal frame of $N$ in a neighbourhood of $\pi(p)$.
Let $\nabla_X^\top Y$ and $\nabla_X^\bot Y$ be the projections of $\nabla^M_XY$ to $\mathscr{K}$ and its
normal distribution, respectively.
We compute,
$$
|\nabla^NF|^2=\sum_{\alpha=k+1}^n|(\pi_*X_\alpha)F|^2=\sum_{\alpha=k+1}^n|X_\alpha F|^2
=\sum_{A=1}^n|X_A F|^2=|\nabla^MF|^2,
$$
where the third equality is because $F$ takes constant value on each fibre.
At $p$ we compute,
\begin{gather*}
\begin{aligned}
\Delta^NF=&\sum_{\alpha=k+1}^n((\pi_*X_\alpha)((\pi_*X_\alpha)F)-(\nabla^M_{\pi_*X_\alpha}\pi_*X_\alpha)F)\\
=&\sum_{\alpha=k+1}^n(X_\alpha(X_\alpha F)-(\nabla_{X_\alpha}^\bot X_\alpha)F),
\end{aligned}\\
\begin{aligned}
\Delta^MF=&\sum_{\alpha=1}^k(X_i(X_i F)-(\nabla_{X_i}^M X_i)F)
+\sum_{\alpha=k+1}^n(X_\alpha(X_\alpha F)-(\nabla_{X_\alpha}^M X_\alpha)F)\\
=&-\sum_{\alpha=1}^k(\nabla_{X_i}^\bot X_i)F
+\sum_{\alpha=k+1}^n(X_\alpha(X_\alpha F)-(\nabla_{X_\alpha}^\bot X_\alpha)F),
\end{aligned}\\
\Delta^NF-\Delta^MF=HF=0,
\end{gather*}
where $H$ is the mean curvature vector,
and we have used the fact
$\pi_*(\nabla_{X_\alpha}^\bot X_\alpha)=\nabla_{\pi_*{X_\alpha}}^N \pi_*{X_\alpha}$,
which can be obtained from basic properties of Riemannian submersion.
\end{proof}

Let $K\le H\le G$ be a chain of Lie subgroups, which induces a principal bundle $H/K\to G/K\to G/H$,
then a function on $G/H$ is isoparametric if and only if its pullback to $G/K$ under the bundle projection is isoparametric, by Lemma \ref{Riesubm}.

\section{Properties of Isoparametric Hypersurfaces in $S^n\times S^n$}\label{property}

In order to understand isoparametric hypersurfaces in $S^n\times S^n$ better,
we inquire some fundamental properties of them,
analogous to those classical properties of isoparametric hypersurfaces in $S^n$.

\subsection{Isoparametric polynomials for $S^n\times S^n$}

For clarity, we restate (A) of the Theorem as the following lemma.

\begin{lemma}\label{isoparabebihomo}
Let $M$ be a connected closed isoparametric hypersurface in $S^n\times S^n$, i.e., a regular level set of
an isoparametric function $F_0$ defined on a neighbourhood of $M$.
Then there exists a bi-homogeneous function $F$, say, of bi-degree $(g,g)$,
on $\mathbb{R}^{n+1}\times\mathbb{R}^{n+1}$ such that
\begin{itemize}
\item[1.]$F$ takes constant values on each regular level set of $F_0$,
$\mathrm{im}\,F|_{S^n\times S^n}=[-1,1]$;
\item[2.]for any $x\in S^n$ (resp. $y\in S^n$),
$\mathrm{im}\,F|_{\{x\}\times S^n}$ (resp. $\mathrm{im}\,F|_{S^n\times\{y\}}$) equals $[-1,1]$;
\item[3.]for any $x\in S^n$ (resp. $y\in S^n$), $F|_{\{x\}\times S^n}$ (resp. $F|_{S^n\times\{y\}}$)
is the Cartan-M{\"u}nzner polynomial of $M\cap(\{x\}\times S^n)$ (resp. $M\cap(S^n\times\{y\})$),
for any $y\in S^n$;
\item[4.]$g=1,2,3,4,\text{or }6$.
\end{itemize}
\end{lemma}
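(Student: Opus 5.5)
Let $\mathcal{F}$ be the foliation of $S^n\times S^n$ whose leaves are the parallel translates of $M$. The plan is to reduce everything to the slices $\{x\}\times S^n$ and $S^n\times\{y\}$, show that on each slice $\mathcal{F}$ cuts out an isoparametric foliation of $S^n$, and then glue the Cartan-M\"unzner polynomials of the slices given by Lemma \ref{CM}.

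\textbf{Step 1: parallel translates are isoparametric.} First I would show that $\mathcal{F}$ is defined globally and is isoparametric. $S^n\times S^n$ is symmetric, so the ODE system (\ref{Ricatti})--(\ref{curvatureadapted}) applies along normal geodesics. Since $F_0$ is isoparametric near $M$, the principal data $(A,Q)$ of $M$ should be constant along $M$ up to a smooth conjugation, in the sense that the eigenvalues of $A$ and of $Q$ and their relative position are constant. The solution of the ODE then gives each parallel hypersurface constant mean curvature $\mathrm{tr}\,A$. Compactness and analyticity let me extend $F_0$ to a global transnormal function $\mathrm{dist}(M,\cdot)$, so that $\mathcal{F}$ has two focal (singular) leaves.

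\textbf{Step 2: slices.} Fix $x$ and consider the restriction of $\mathcal{F}$ to $\{x\}\times S^n$. I expect this to be the main obstacle: I must show that the intersections $M_t\cap(\{x\}\times S^n)$ are isoparametric hypersurfaces of $S^n$, with the same $g$ for every $x$, and that they are not degenerate, i.e.\ the slice is not tangent to a leaf. The approach I would try first uses the normal $N=(N_1,N_2)$ and Urbano's angle function $C=\langle PN,N\rangle=|N_1|^2-|N_2|^2$. I would show that $C$ is constant on each leaf, via the Codazzi equation and the equations for $A$ and $Q$ (compare \cite{de2025isoparametric}), and then that nontriviality forces $|C|<1$. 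Granting that, the slice foliation has unit normal $N_2/|N_2|$, and the slice distance relates to the ambient distance by the constant factor $|N_2|$. The leaves of the slice are therefore equidistant, and a second-fundamental-form computation shows their principal curvatures are constant. By Cartan these are isoparametric hypersurfaces of $S^n$, so Münzner's theorem gives a $g_x\in\{1,2,3,4,6\}$. This $g_x$ is locally constant in $x$, since the focal distance of the slice equals $\pi/g_x$ times a fixed ratio, and hence constant. By the symmetry $|N_1|\leftrightarrow|N_2|$ the same $g$ occurs on the other factor, which gives item 4.

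\textbf{Step 3: gluing.} Normalize $F_0$ on $S^n\times S^n$ to be $\sin(\text{rescaled distance to a focal leaf})$ with image $[-1,1]$, exactly as in Lemma \ref{CM}. The restriction of $F_0$ to each slice $\{x\}\times S^n$ is then precisely the normalized function $\sin\frac{\pi t}{2}$ of Lemma \ref{CM}. This uses the constant-ratio relation from Step 2: the slice's focal leaves are the intersections with the ambient focal leaves, and the rescaling matches. Define
$$F(x,y)=|x|^g|y|^gF_0\Bigl(\frac{x}{|x|},\frac{y}{|y|}\Bigr).$$
For fixed $x$, the function $F(x,\cdot)$ is the Cartan-M\"unzner polynomial of degree $g$, and symmetrically in the other variable. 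A function that is a polynomial of degree $g$ separately in each variable is a polynomial, so $F$ is a bi-homogeneous polynomial of bi-degree $(g,g)$. Items 1 and 3 follow. Item 2 holds because every slice meets both focal leaves, each Cartan-M\"unzner polynomial attaining $\pm1$ on its focal submanifolds.
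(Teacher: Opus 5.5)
There is a genuine gap at the end of your Step 2, in the claim that ``by the symmetry $|N_1|\leftrightarrow|N_2|$ the same $g$ occurs on the other factor.'' Your own constant-ratio relation shows why it fails. Write $a=|N_1|$ and $b=|N_2|$, which are constant with $a^2+b^2=1$, and let $d$ be the ambient distance between the focal leaves. The slice foliation of $\{x\}\times S^n$ then has focal distance $d/b=\pi/g_2$. The slice foliation of $S^n\times\{y\}$ has focal distance $d/a=\pi/g_1$. Hence $g_1/g_2=a/b$. Exchanging the factors swaps $a$ and $b$; it does not make them equal. Equal degrees on the two families of slices is equivalent to $C=a^2-b^2=0$, and nothing in your proposal proves this. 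Without it you only get bi-degree $(g_1,g_2)$. The $(g,g)$ conclusion, item 3 with a common $g$, and your extension $|x|^g|y|^gF_0(\cdot)$ all depend on this missing step.

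The missing ingredient is the paper's dimension count. Take $p=(x,y)\in M_+$. Every unit vector $w\in\nu_pM_+$ is the initial velocity of a normal geodesic of the foliation, since regular leaves are tubes over the focal leaf. Hence $|\pi_1w|=a>0$ and $|\pi_2w|=b>0$, so $\nu_pM_+\cap(T_xS^n\oplus0)=\nu_pM_+\cap(0\oplus T_yS^n)=0$. This means $M_\pm$ are transversal to both families of slices. Each intersection $M_\pm\cap(S^n\times\{y\})$ and $M_\pm\cap(\{x\}\times S^n)$ then has dimension $n-1-m_\pm$. So both slice foliations have multiplicities $(m_+,m_-)$, and M\"unzner's relation $g(m_++m_-)=2(n-1)$ forces $g_1=g_2$. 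This is exactly how the paper gets $r=s$.

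Two smaller points:
\begin{itemize}
\item In Step 1, the hypothesis does not give constancy of the joint eigenvalue data of $(A,Q)$ along $M$. It gives only that $H(p,t)$ is independent of $p$ for small $|t|$. What propagates this to all $t$ is the meromorphic continuation in $t$ of the Riccati solution, as in the paper.
\item In Step 2, your ``second-fundamental-form computation'' is neither spelled out nor obviously available. The clean route is that $t|_{\{x\}\times S^n}$ is transnormal, hence isoparametric by Wang's theorem that transnormal functions on $S^n$ are isoparametric.
\end{itemize}
Using de Lima--Pipoli for the constancy of $C$, in place of the paper's closed-geodesic argument, is fine, since $C$ is also constant along normal geodesics.
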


\begin{proof}
Let $E:M\times\mathbb{R}\to S^n\times S^n$ be the normal exponential map.
Since $M$ is closed, $E$ is epimorphic.
For any $p,q\in M$, $H(p,t)=H(q,t)$ for $t\in (-\varepsilon,\varepsilon)$, $\varepsilon$ small enough.
Since the solutions to (\ref{Ricatti})-(\ref{curvatureadapted}) are analytic functions of $t$,
for each $p\in M$, $H(p,t)$ can be extended to a meromorphic function and $H(p,t)=H(q,t)$ for any $p,q\in M$ and $t\in \mathbb{R}$. 
Therefore, $H$ is a constant function on each $M_t$, whenever it is finite.
When $H(p,t)$ is finite, $dE|_{(p,t)}$ is non-degenerate. Hence $E(M,t)$ is a hypersurface when $H(t)$ is finite.

Since $M$ is a closed connected hypersurface of an oriented manifold,
$M$ is orientable and has two sides. Let $t_1$ and $t_2$ be the greatest negative sigular time
and the smallest positive singular time of the normal exponential map, respectively.
By the variation theory of geodesic lines, $E(M,[t_1,t_2])=S^n\times S^n$, $E|_{M\times(-1,1)}$ is monomorphic,
and the function $F:S^n\times S^n\to\mathbb{R}$, $E(p,t)\mapsto \sin\frac{\pi}{t_2-t_1}(t-\frac{t_1+T_2}{2})$ is well-defined.
Then $\mathrm{im}(F)=[-1,1]$.

Let $M_t:=F^{-1}(sin\frac{\pi t}{2})$.
Then $M_t$ is a connnected closed hypersurface in $S^n\times S^n$, when $t\in(-1,1)$, $M_+:=M_1$ and $M_-:=M_{-1}$
are its focal varieties.
By a) of Theorem A in \cite{wang1987isoparametric},
which states that the focal variety of a transnormal hypersurface is a smooth submanifold,
$M_+$ and $M_-$ are submanifolds of codimesion $>1$.

Let $\pi_1$ and $\pi_2$ be the projection of $S^h\times S^n$ to the first and the second entries, respectively. 
Assume the normal vector to a regular level set $M_t$ at $(x,y)$ is $(\varepsilon_0,\varepsilon_0')$,
extend $(\varepsilon_0)$ to a basis $(\varepsilon_0,\varepsilon_1,\dots,\varepsilon_{n-1})$ of $T_xS^n$.
Then $({\pi_1}_*(-\frac{\varepsilon_0}{|\varepsilon_0'|^2},\frac{\varepsilon_0'}{|\varepsilon_0|^2}),\\
{\pi_1}_*(\varepsilon_1,0),\dots,{\pi_1}_*(\varepsilon_{n-1},0))$ forms a basis of $T_xS^n$.
Since $\pi_*$ is not degenerate at any point of $M_t$, $-1\le t\le1$, $\pi(M_t)$ is an open subset of $S^n$.
Since $\pi(M_t)$ is also a closed non-empty subset of $S^n$, $\pi_1(M_t)=S^n$.
Assume $M_+=E(M,T)$.
For any $x\in S^n$, there exists $p_k$ such that $\pi_1\circ E(p_k,(1-2^{-k})T)=x$.
There exists a Cauchy subsequence of $\{(p_k,(1-2^{-k})T)\}$ converging to $(p,T)$, and $\pi_1\circ E(p,T)=x$.
Hence $\pi_1(M_+)=S^n$, and similarly $\pi_1(M_-)=S^n$.
Hence for any $t\in[-1,1]$ and $x\in S^n$, $M_t\cap(\{x\}\times S^n)$ is non-empty,
and $[-1,1]\subseteq \mathrm{im}\,F|_{\{x\}\times S^n}$.
Therefore, $F|_{\{x\}\times S^n}=[-1,1]$, and similarly, $F|_{S^n\times\{y\}}=[-1,1]$.

Since the normal distribution of an isoparametric foliation is geodesic (proven in Subsection \ref{ODE}),
on an integral (geodesic) line of $\mathrm{grad}_2\,F$,
there exsit positive numbers $a$ and $b$ with $a^2+b^2=1$ such that
$|\widetilde{\mathrm{grad}}^x\,F|=a|\widetilde{\mathrm{grad}}\,F|$, and that $|\widetilde{\mathrm{grad}}^y\,F|=b|\widetilde{\mathrm{grad}}\,F|$.
The integral line of $\mathrm{grad}\,F$ passing through $p$ is of finite length if and only if
$a/b$ is rational. If $a$ or $b$ is not constant, then $a/b$ ranges over all rational numbers of some interval.
In that case, there exist two values of $a/b$,
whose corresponding integral circles have irrational length ratio. This contradict with the fact that
lengths of all closed integral lines are integer multiples of $\mathrm{dist}(M_+,M_-)$.
Hence $a$ and $b$ are constant functions. For fixed $y\in S^n$, since $|\widetilde{\mathrm{grad}}^x\,F|$ takes
a constant value on each level set of $F|_{S^n\times\{y\}}$,
$\{M_t\cap(S^n\times\{y\})\}$ forms a singular transnormal foliation on $S^n$.
It is also an isoparametric foliation, because of Theorem B in \cite{wang1987isoparametric}, 
which states that transnormal functions on $S^n$ and $\mathbb{R}^n$ are isoparametirc.
Hence $F_0|_{S^n\times\{y\}}$ is isoparametric. For the same reason, $F|_{\{x\}\times S^n}$ is isoparametric.

Let $m_+:=\dim M_0-\dim M_+$, $m_-:=\dim M_0-\dim M_-$.
For any $p=(x,y)\in M_+$,
since $\pi_2(X)=X-\pi_1(X)\in T_pM_++T_p(S^n\times\{y\})$ and $\pi_2(T_pM_+)=T_yS^n$,
we have $T_p(M_+)+T_p(S^n\times\{y\})=T_p(\{x\}\times S^n)+T_p(S^n\times\{y\})=T_p(S^n\times S^n)$.
We compute
\begin{equation*}\begin{aligned}
&\dim(T_pM_+\cap T_p(S^n\times\{y\}))\\
=&\dim T_pM+\dim(S^n\times\{y\})-\dim(T_pM_++T_p(S^n\times\{y\}))\\
=&(2n-1-m_+)+n-2n\\
=&n-1-m_+.
\end{aligned}\end{equation*}
Hence $M_+\cap(S^n\times\{y\})$ is an $(n-1-m_+)$-dimensional submanifold of $S^n\times\{y\}$ for any $y\in S^n$.
Similarly, $M_-\cap(S^n\times\{y\})$,$M_+\cap(S^n\times\{x\})$ and $M_-\cap(S^n\times\{x\})$
are $(n-1-m_-)$-dimensional, $(n-1-m_+)$-dimensional and $(n-1-m_-)$-dimensional submanifolds of
$S^n\times\{y\}$, $S^n\times\{x\}$ and $S^n\times\{x\}$, respectively.
Suppose the isoparametric foliations $\{M_t\cap(S^n\times\{y\})\}$ and $\{M_t\cap(\{x\}\times S^n)\}$ are of degree $r$ and $s$, respectively.
Since $r(m_++m_-)=2n-2=s(m_++m_-)$, we have $r=s$. Let $g:=r$.

The domain of $F$ can be extended to $\mathbb{R}^{n+1}\times\mathbb{R}^{n+1}$ via
$F(x,y)=|x|^g|y|^gF(\frac{x}{|x|},\frac{y}{|y|})$.
Since as a function on $S^n\times\{y\}$, $F(M_t\cap(S^n\times\{y\}))=\sin\frac{\pi t}{2}$. 
Hence, by Lemma \ref{CM}, $F$ coincides with the Cartan-M\"{u}nzner polynomial of $\{M_t\}$, and $x\mapsto F(x,y)$ is a polynomial of degree $g$. 
Similarly, for any $x$, $y\mapsto F(x,y)$ is an isoparametric polynomial of degree $g$ for $S^n$.

Since
\begin{gather*}
\frac{\partial^{2g+1}F}{\partial x^{i_1}\cdots\partial x^{i_{g+1}}\partial y^{j_1}\cdots\partial y^{j_g}}=0
\text{ for any } i_1,\dots,i_{g+1},j_1,\dots,j_g,\\
\frac{\partial^{2g+1} F}{\partial x^{i_1}\cdots\partial x^{i_g}\partial y^{j_1}\partial\cdots y^{j_{g+1}}}=0
\text{ for any } i_1,\dots,i_g,j_1,\dots,j_{g+1},
\end{gather*}
$F$ is a polynomial of bidegree $(g,g)$.
\end{proof}

We say an isoparametric hypersurface or an isoparametric foliation is of bi-degree $(g,g)$,
if the polynomial it determines as in Lemma \ref{isoparabebihomo} is of bi-degree $(g,g)$.

\subsection{The second fundamental form}\label{2ndfundform}

Let $F$ be a isoparametric polynomial function on $\mathbb{R}^{2n+2}$ of bi-degree $(g,g)$
with $\mathrm{im}\,F|_{S^n\times S^n}=[-1,1]$, $M$ a regular level set of $F|_{S^n\times S^n}$. We compute the second fundamental form
of $M\hookrightarrow S^n\times S^n$.

For $(x,y)\in S^n\times S^n$, on a neighbourhood of $(x,y)$ in $S^n\times S^n$, we define an orthonormal frame
\begin{gather*}
e_0=\frac{1}{\sqrt{1-F^2}}(\frac{1}{g}\frac{\partial F}{\partial x}-Fx,0),
e_1=(\varepsilon_1,0),\dots,e_{n-1}=(\varepsilon_{n-1},0),\\
e'_0=\frac{1}{\sqrt{1-F^2}}(0,\frac{1}{g}\frac{\partial F}{\partial y}-Fy),
e'_1=(0,\varepsilon'_1),\dots,e'_{n-1}=(0,\varepsilon'_{n-1}).
\end{gather*}
The projection of $e_0$ (resp. $e'_0$) to $M$ is $\frac{e_0-e'_0}{2}$ (resp. $\frac{e'_0-e_0}{2}$).

We compute,
\begin{gather*}
\widetilde{\nabla}_{e_i}e_j=(\widetilde{\nabla}_{\varepsilon_i}\varepsilon_j,0)
=(\widehat{\nabla}_{\varepsilon_i}\varepsilon_j+B^1(\varepsilon_i,\varepsilon_j)\varepsilon_0,0)
=\sum_{i=1}^{n-1}c_ie_i+B^1(\varepsilon_i,\varepsilon_j)e_0,\\
\widetilde{\nabla}_{e'_i}e'_j=\sum_{i=1}^{n-1}c'_ie'_i+B^2(\varepsilon'_i,\varepsilon'_j)e'_0,\\
\widetilde{\nabla}_{e_i}e_0=(\widetilde{\nabla}_{\varepsilon_i}\varepsilon_0,0)
=(-\sum_{j=0}^{n-1}B^1(\varepsilon_i,\varepsilon_j)\varepsilon_j,0)
=-\sum_{j=0}^{n-1}B^1(\varepsilon_i,\varepsilon_j)e_j,\\
\widetilde{\nabla}_{e'_i}e'_0=-\sum_{j=0}^{n-1}B^2(\varepsilon'_i,\varepsilon'_j)e'_j.
\end{gather*}
Since the integral curves of $e_0$ and $e'_0$ are geodesics lines in $S^{n+1}\times S^{n+1}$, we have
$$\widetilde{\nabla}_{e_0}e_0=0,\quad\widetilde{\nabla}_{e'_0}e'_0=0.$$
Noticing that $e_0=\frac{1}{\sqrt{1-F^2}}\sum_{i=1}^{n-1}(\frac{1}{g}\frac{\partial F}{\partial x^i}-Fx^i)\frac{\partial}{\partial x^i}$,
we compute the $j$'th entry of $\sqrt{1-F^2}\,\nabla_{e_0}\frac{\partial F}{\partial y}$,
$$\begin{aligned}
&\sum_{i=0}^{n-1}(\frac{1}{g}\frac{\partial F}{\partial x^i}-Fx^i)\frac{\partial^2 F}{\partial x^i\partial y^j}
=\sum_{i=0}^{n-1}\frac{1}{2g}\frac{\partial}{\partial y^j}(\frac{\partial F}{\partial x^i})^2
-F\frac{\partial}{\partial y^j}(\sum_{i=0}^nx^i\frac{\partial F}{\partial x^i})\\
=&\frac{1}{2g}\frac{\partial}{\partial y^j}(g^2|x|^{2g-2}|y|^{2g})-F\frac{\partial}{\partial y^j}(gF)
=g^2y^j-gF\frac{\partial F}{\partial y^j}.
\end{aligned}$$
Hence $\sqrt{1-F^2}\,\nabla_{e_0}\frac{\partial F}{\partial y^0}=(0,g^2y-gF\frac{\partial F}{\partial y})$.
Also noting that
$$\sqrt{1-F^2}e_0(\frac{1}{1-F^2})=\frac{gF}{\sqrt{1-F^2}},\quad
\sqrt{1-F^2}e_0(\frac{F}{1-F^2})=\frac{g}{\sqrt{1-F^2}},$$
we compute,
$$\begin{aligned}
\sqrt{1-F^2}\,\nabla_{e_0}e'_0=&\sqrt{1-F^2}\nabla_{e_0}
(0,\frac{1}{g\sqrt{1-F^2}}\frac{\partial F}{\partial y}-\frac{F}{\sqrt{1-F^2}}y)\\
=&(0,\frac{F}{\sqrt{1-F^2}}\frac{\partial F}{\partial y}
+\frac{1}{\sqrt{1-F^2}}(gF-F\frac{\partial F}{\partial y})-\frac{g}{\sqrt{1-F^2}}y)\\
=&0.
\end{aligned}$$
Therefore, $\widetilde{\nabla}_{e_0}e'_0=0$. For the same reason, $\widetilde{\nabla}_{e'_0}e_0=0$.

By the above computation, the second fundamental form $B$ of $M\hookrightarrow S^n\times S^n$ satisfies
\begin{gather*}
\left\{\begin{aligned}
B(e_i,e_j)=&\langle\widetilde{\nabla}_{e_i}e_j,\frac{e_0+e'_0}{\sqrt{2}}\rangle
=B^1(\varepsilon_i,\varepsilon_j)\langle e_0,\frac{e_0+e'_0}{\sqrt{2}}\rangle
=\frac{1}{\sqrt{2}}B^1(\varepsilon_i,\varepsilon_j),\\
B(e'_i,e'_j)=&\frac{1}{\sqrt{2}}B^2(\varepsilon'_i,\varepsilon'_j),
\end{aligned}\right.\\
\left\{\begin{aligned}
B(e_i,\frac{e_0-e'_0}{\sqrt{2}})=&\langle\widetilde{\nabla}_{e_i}(\frac{e_0-e'_0}{\sqrt{2}}),\frac{e_0+e'_0}{\sqrt{2}}\rangle\\
=&\frac{1}{4}e_i(|e_0|^2-|e'_0|^2)+\frac{1}{2}\langle\widetilde{\nabla}_{e_i}e_0,e'_0\rangle
-\frac{1}{2}\langle\widetilde{\nabla}_{e_i}e'_0,e_0\rangle\\
=&0,\\
B(e'_i,\frac{e_0-e'_0}{\sqrt{2}})=&0,
\end{aligned}\right.\\
B(\frac{e_0-e'_0}{\sqrt{2}},\frac{e_0-e'_0}{\sqrt{2}})
=\langle\widetilde{\nabla}_{\frac{e_0-e'_0}{\sqrt{2}}}(\frac{e_0-e'_0}{\sqrt{2}}),\frac{e_0+e'_0}{\sqrt{2}}\rangle=0,\\
B(e_i,e'_j)=\langle\widetilde{\nabla}_{e_i}e'_j,\frac{e_0+e'_0}{\sqrt{2}}\rangle
=-\frac{1}{\sqrt{2}}\langle e'_j,\widetilde{\nabla}_{e_i}e'_0\rangle,
\end{gather*}
where $B^1$ and $B^2$ are the second fundamental forms
of $M\cap(S^{n+1}\times\{y\})\hookrightarrow S^{n+1}\times\{y\}$ and
$(S^{n+1}\times\{y\})\cap M\hookrightarrow \{x\}\times S^{n+1}$, respectively.

\subsection{Principal curvatures}

For an isoparametric hypersurface $M$ in $S^n\times S^n$,
we take an orthogonal frame in a small domain in $M$, with one in it $-\frac{e_0-e_0'}{\sqrt{2}}$ as in Section \ref{2ndfundform}. 
Then the initial data of the ODEs (\ref{Ricatti})-(\ref{curvatureadapted}) under that fra is
$$A=\begin{bmatrix}A'&\\&0\end{bmatrix},\quad
Q=\begin{bmatrix}\frac{1}{2}I_{n-1}&&\\&\frac{1}{2}I_{n-1}&\\&&0\end{bmatrix}.$$
Without loss of generality, we assume $A'$ is diagonal. 
By solving (\ref{Ricatti})-(\ref{curvatureadapted}),
we find the principal curvatures of $M$ are 
$$\lambda_0=0,\quad \lambda_i=-\frac{1}{\sqrt{2}}\cot(\frac{t}{\sqrt{2}}+\theta_i)\ i\ne0.$$
One should reverse the sign when taking opposite normal vector.

Choose a regular point $(x,y)$ of $F$, let $C^x$ (resp. $C^y$) be the great circles in $S^n$ containing $x$ (resp. $y$)
with tangent vector $\pi_1(\widetilde{\mathrm{grad}}\,F)|_x$ (resp. $\pi_2(\widetilde{\mathrm{grad}}\,F)|_y$) at $x$ (resp. $y$).
Since $C^x\times C^y$ is totally geodesic,
the closed geodesic line $C$ starting from $(x,y)$ with initial velocity $e_0+e'_0$ falls in $C^x\times C^y$.
Since $e_0-e'_0$ is a parallel vector field along $C$, which is tangent to $C^x\times C^y$,
the closed geodesic lines starting from any point in $C$ with initial velocity $e_0-e'_0$ falls in $C^x\times C^y$.
Then $C^x\times C^y$ is a disjoint union of these closed geodesic lines,
on each of which $|\widetilde{\mathrm{grad}}\,F|$ takes constant values.
See the following figure for an illustration.
\begin{figure}[H]
\centering
  \includegraphics[height=3cm]{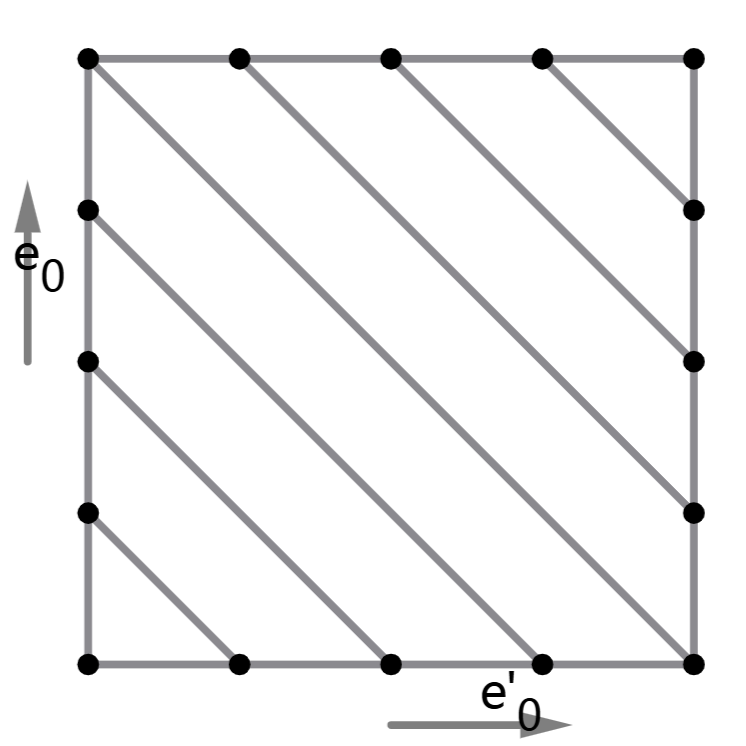}
  \caption*{Singular values of $F|_{C^x\times C^y}$, the case $g=2$}
\end{figure}

The critical points of $F$ are exactly those integral lines of $e_0-e'_0$
passing through some critical point in $C^x\times\{y\}$.
Thus principal curvatures of $M_t$ are
$$-\frac{1}{\sqrt{2}}\cot(\frac{\pi t-(2k-1)\pi}{4g}),\quad k=0,\dots,2g-1.$$
In particular, when $g$ is even, since $F^{-1}(1)=M_+$, $F^{-1}(-1)=M_-$,
those principal curvatures $\to\infty$ as $t\to1$ has multiplicity $m_+$, their values on $M_t$ are
$$-\frac{1}{\sqrt{2}}\cot(\frac{\pi t-(4k+1)\pi}{4g}),\quad k=0,\dots,g-1,$$
those principal curvatures $\to\infty$ as $t\to-1$ has multiplicity $m_-$, their values on $M_t$ are
$$-\frac{1}{\sqrt{2}}\cot(\frac{\pi t-(4k-1)\pi}{4g}),\quad k=0,\dots,g-1.$$
When $g$ is odd, the mean curvature of $M_t$ is
$$H=\frac{-m}{\sqrt{2}}\sum_{k=0}^{2g-1}\cot(\frac{\pi (t-1)}{4g}-\frac{k\pi}{2g})
=-\frac{2gm}{\sqrt{2}}\cot(\frac{\pi(t-1)}{2})=\frac{2gm}{\sqrt{2}}\tan(\frac{\pi t}{2}).$$
When $g$ is even, the mean curvature of $M_t$ is
$$\begin{aligned}
H=&\frac{-m_+}{\sqrt{2}}\sum_{k=0}^{g-1}\cot(\frac{\pi (t-1)}{4g}-\frac{k\pi}{g})
+\frac{-m_-}{\sqrt{2}}\sum_{k=0}^{g-1}\cot(\frac{\pi (t+1)}{4g}-\frac{k\pi}{g})\\
=&-\frac{gm_+}{\sqrt{2}}\cot(\frac{\pi(t+1)}{4})-\frac{gm_-}{\sqrt{2}}\cot(\frac{\pi(t-1)}{4}).
\end{aligned}$$

\subsection{Principal foliations}\label{principalprop}

Principal distributions of any isoparametric hypersurface in $S^n$ are integral and their integral submanifolds
are totally umbilic. Those properties also hold for isoparametric hypersurfaces in $S^n\times S^n$, shown in the following lemma.

\begin{lemma}\label{prinfoli}
Let $M$ be a connected closed isoparametric hypersurface in $S^n\times S^n$,
then each principal distribution of $M$ is integral,
and each of its integral submanifold is a totally unbilic submanifold of $S^n\times S^n$.
\end{lemma}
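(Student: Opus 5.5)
The approach is the classical Cartan argument via the Codazzi equation, adapted to the ambient curvature of $S^n\times S^n$. This works because the principal curvatures are constant and the ambient curvature tensor is parallel and compatible with the principal splitting.

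First we fix the structure of the shape operator $A$ of $M$. By the computation in Subsection \ref{2ndfundform}, at $(x,y)\in M$ the unit normal is $\nu=\frac{e_0+e_0'}{\sqrt2}$. The vector $\frac{e_0-e_0'}{\sqrt2}$ is a principal direction with curvature $\lambda_0=0$. The remaining principal directions are combinations of $e_i$ and $e'_j$, with the nonzero constant curvatures $\lambda_i=-\frac1{\sqrt2}\cot(\frac{t}{\sqrt2}+\theta_i)$ found above. Since the Jacobi operator $Q=R(\cdot,\nu)\nu$ is $\frac12 I$ on $\mathrm{span}\{e_i,e'_j\}_{i,j\ge1}$ and $0$ on $\frac{e_0-e'_0}{\sqrt2}$, it commutes with $A$. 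Because $dQ/dt=[A,Q]$ (equation (\ref{curvatureadapted})) vanishes along the flow, this curvature-adaptedness persists on all parallel leaves.

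Next we show that $R(X,Y)\nu$ is tangent to $M$ and lies in a controlled place. For $S^n\times S^n$ we have $R=R^1\oplus R^2$ with $R^i(X,Y)Z=\langle Y,Z\rangle X-\langle X,Z\rangle Y$ on each factor. Write $\nu=\frac1{\sqrt2}(\varepsilon_0,\varepsilon_0')$ and decompose $X=(X_1,X_2)$. For $X,Y$ in principal spaces $T_\lambda$ and $T_\mu$, the Codazzi equation reads
$$(\widehat\nabla_XA)Y-(\widehat\nabla_YA)X=\big(R(X,Y)\nu\big)^\top .$$
We verify directly that $\langle R(X,Y)\nu,Z\rangle=0$ whenever $X,Y\in T_\lambda$ and $Z\in T_\lambda^\perp$, and also whenever $X,Y\in T_\lambda$ and $Z=\nu$. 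The tool is the explicit formula
$$\langle R(X,Y)\nu,Z\rangle=\frac1{\sqrt2}\Big(\langle Y_1,\varepsilon_0\rangle\langle X_1,Z_1\rangle-\langle X_1,\varepsilon_0\rangle\langle Y_1,Z_1\rangle+(\text{same with }2)\Big),$$
together with the facts that $\lambda\ne0$ principal vectors are orthogonal to $\varepsilon_0,\varepsilon_0'$ in each factor, and that $T_\lambda$ is $P$-invariant or paired appropriately. Here $P(X,Y)=(X,-Y)$ is Urbano's product structure, which commutes with $Q$; this pairing is the delicate point.

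With that curvature term controlled, the standard argument applies. Constancy of $\lambda$ gives $(\widehat\nabla_XA-\lambda)\ldots$, hence $\langle(A-\lambda)\widehat\nabla_XY,Z\rangle=\langle(\widehat\nabla_YA)X-(\widehat\nabla_XA)Y,Z\rangle$ up to the vanishing curvature term. Antisymmetrizing then shows that $[X,Y]\in T_\lambda$, so the distribution is integrable. The same identity gives $\langle\widehat\nabla_XY,Z\rangle=0$ for $Z\in T_\mu$ with $\mu\ne\lambda$, except through terms symmetric in $X,Y$. Consequently, the second fundamental form of a leaf $L$ of $T_\lambda$ in $S^n\times S^n$ equals $\langle X,Y\rangle(\lambda\nu)$ plus the $M$-part, and that $M$-part is $\langle X,Y\rangle$ times a fixed vector obtained from $\nabla\lambda=0$ via Codazzi with $X=Y$ unit. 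The leaf is therefore totally umbilic. The $\lambda_0=0$ distribution is one-dimensional, spanned by $e_0-e_0'$, so it is trivially integrable. Its integral curves are geodesics, since $\widetilde\nabla_{e_0}e_0=\widetilde\nabla_{e_0}e_0'=0$ and so on, as computed before; hence they are totally geodesic.

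The main obstacle is the curvature check in the second step. Unlike in space forms, $R(X,Y)\nu$ need not vanish. We must show it has no component in $T_\lambda^\perp\cap TM$ for $X,Y\in T_\lambda$, which requires knowing how each $T_\lambda$ sits relative to the factors. The first thing to try is to show that $T_\lambda$ is spanned by vectors of the form $(u,\pm Ju)$, using the explicit form of $B(e_i,e'_j)$ and the constancy of $A$ on $C^x\times C^y$. If that fails, the fallback is the ODE system (\ref{Ricatti})--(\ref{curvatureadapted}), which exhibits the eigenspaces as parallel along normal geodesics.
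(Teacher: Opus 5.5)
Your integrability argument is correct, and it takes a different route from the paper, which identifies $T_\lambda$ with the tangent spaces of the fibres of the focal map $E_{t_0}$. For $\lambda\neq0$, every $X\in T_\lambda$ is orthogonal to both $e_0$ and $e_0'$. So both factor terms in your curvature formula vanish, and $\widetilde{R}(X,Y)\nu=0$ identically for $X,Y\in T_\lambda$. Cartan's identity $(\lambda-A)[X,Y]=0$ then goes through verbatim. This also shows that the step you call the main obstacle is trivial: it needs no $P$-invariance or pairing. The real difficulty lies elsewhere, and your proposal does not address it.

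The gap is in the umbilicity step. To control $\langle\nabla_XY,Z\rangle$ for $X,Y\in T_\lambda$ and $Z\in T_\mu$, you must apply Codazzi to the pair $(X,Z)$, not to $(X,Y)$. The curvature term is then $\langle\widetilde{R}(X,Z)\nu,Y\rangle$.
\begin{itemize}
\item For $\mu\neq0$ this term vanishes, as above.
\item For $Z=\frac{e_0-e_0'}{\sqrt2}$ (the $\mu=0$ direction), your own formula gives $\widetilde{R}(X,Z)\nu=\frac12PX$. Hence $\lambda\langle\nabla_XY,Z\rangle=\pm\frac12\langle PX,Y\rangle$, which is the paper's (\ref{eqforumbilic2}).
\end{itemize}
So a leaf of $T_\lambda$ is umbilic only if $\langle P\cdot,\cdot\rangle$ restricted to $T_\lambda$ is a multiple of the metric. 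Your sentence producing the $M$-part ``from $\nabla\lambda=0$ via Codazzi'' yields nothing of the kind.

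This is a genuine geometric fact, not a formality. $P$ does not commute with $A$: for $F=\langle x,y\rangle$ one has $T_{\lambda_\pm}=\{(u,\pm u):u\perp x,y\}$, and $P$ swaps them. What actually holds is $PT_\lambda\perp T_\lambda$, not $P$-invariance.

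The paper proves $|\pi_1X|=|\pi_2X|$ on $T_\lambda$ as follows:
\begin{itemize}
\item Since $Q=\frac12 I$ on $\{e_0,e_0'\}^\perp$, the eigenspaces of $A$ are parallel along the normal geodesic.
\item The parallel transport of $T_\lambda$ to the focal point $p_T$, where $\lambda$ blows up, lies in $N_{p_T}M_+$.
\item Every unit vector of $N_{p_T}M_+$ is the velocity of a geodesic meeting the regular leaves orthogonally, so its two factor components have equal length.
\item $P$ is parallel, so this property transfers back to $T_\lambda$.
\end{itemize}
Your $(u,\pm Ju)$ description would suffice if you proved it with $J$ orthogonal, and your ODE fallback supplies the parallelism. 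What is missing is the evaluation at the focal submanifold, which is the step that actually kills the curvature term.
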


\begin{proof}
When a principal curvature $-\frac{1}{\sqrt{2}}\cot(\frac{\pi t-(2k-1)\pi}{4g})$ takes $0$, another principal curvature $-\frac{1}{\sqrt{2}}\cot(\frac{\pi t-(2k-2g-1)\pi}{4g})$ takes $\infty$. Since pincipal curvatures are finite on the hypersurface $M$, the principal curvature $0$ of $M$ has multiplicity $1$. 

Denote the principal distribution of the principal curvature $\lambda$ by $T_\lambda$.
Let $B$ be the the second fundamental form of $M$.
Let $e_0$ and $e_0'$ be as in Subsection \ref{ODE}, and $\nu:=\frac{e_0-e_0'}{\sqrt{2}}$, 
which is the principal vector corresponding to the principal curavture $0$.
Define $E_t(x):=\exp_xt\nu$.
Denote the Levi-Civita connection of $\mathbb{R}^{n+1}\times\mathbb{R}^{n+1},S^n\times S^n$ and $M$ by
$\overline{\nabla},\widetilde{\nabla}$ and $\nabla$, respectively.
Let $\widetilde{R}$ be the Riemannian curvature tensor of $S^n\times S^n$, which satisfies
\begin{gather*}
\widetilde{R}(X_1+X_2,Y_1+Y_2)(Z_1+Z_2)=\widetilde{R}^x(X_1,Y_1)Z_1+\widetilde{R}^y(X_2,Y_2)Z_2,\\
\begin{aligned}
&\widetilde{R}(X_1+X_2,Y_1+Y_2,Z_1+Z_2,e_0+e_0')\\
=&\widetilde{R}^x(X_1,Y_1,Z_1,e_0)+\widetilde{R}^y(X_2,Y_2,Z_2,e_0')
=0,
\end{aligned}\\
\begin{aligned}
&\widetilde{R}(X_1+X_2,e_0-e_0',Z_1+Z_2,e_0+e_0')\\
=&\widetilde{R}^x(X_1,e_0,Z_1,e_0)+\widetilde{R}^y(X_2,e_0',Z_2,e_0')\\
=&\langle X_1,Z_1\rangle-\langle X_2,Z_2\rangle,
\end{aligned}
\end{gather*}
where $\widetilde{R}^x$ and $\widetilde{R}^y$ are the Riemannian curvature tensors of $x$-sections and $y$-sections
of $S^n\times S^n$, $X_1,Y_1,Z_1$ tangent vector fields of $S^n\times S^n$ to $x$-sections,
$X_2,Y_2,Z_2$ tangent vector fields of $S^n\times S^n$ to $y$-sections.

For each nonzero principal curvature $\lambda$ of $M$, there exists $t_0\in \mathbb{R}$ such that
$E_{t_0}(M)$ is a focal submanifold $M_+$ of $M$ and $\lim_{t\to t_0}\lambda=\infty$.
Then for each $q\in M_+$, $(E_{t_0})^{-1}(q)$ is a submanifold
of $M$ whose tangent space coincides with $T_\lambda$. In other words,
$(E_{t_0})^{-1}(q)$ is an integral submanifold of $T_\lambda$.
The principal distribution of the principal curvature $0$ is $1$-dimensional, integral.

For any two vector fields $X\in T_\lambda$, and $Y\in T_\mu$, with $|X|=1$ and $\lambda\ne0$, we have
\begin{gather*}
\langle B(\nabla_XX,Y),\nu\rangle=\langle\widetilde{\nabla}_Y\nabla_XX,Y,\nu\rangle
=-\langle\nabla_XX,\widetilde{\nabla}_Y\nu\rangle\\
=\mu\langle\nabla_XX,Y\rangle=-\mu\langle X,\nabla_XY\rangle.
\end{gather*}
On the other hand,
$$\begin{aligned}
\langle B(\nabla_XX,Y),\nu\rangle
=&\langle \nabla^\perp_XB(X,Y)-(\widetilde{\nabla}_XB)(X,Y)-B(X,\nabla_XY),\nu\rangle\\
=&\langle-(\widetilde{\nabla}_YB)(X,X)+\widetilde{R}(Y,X)X,\nu\rangle-\lambda\langle X,\nabla_XY\rangle\\
=&\langle-Y(B(X,X))+2B(\nabla_YX,X)+\widetilde{R}(Y,X)X,\nu\rangle\\
&-\lambda\langle X,\nabla_XY\rangle\\
=&\lambda\langle X,\nabla_XY\rangle+\widetilde{R}(X,Y,X,\nu),
\end{aligned}$$
where we have applied the Codazzi equation to the second equality.
Combine the above two equations to obtain
\begin{equation}\label{eqforumbilic1}
0=(\lambda-\mu)\langle\nabla_XX,Y\rangle+\widetilde{R}(X,Y,X,\nu).
\end{equation}

If $\mu\ne0$, then $\langle Y,e_0\rangle=\langle Y,e_0'\rangle=0$.
Hence $\widetilde{R}(X,Y,X,\nu)=0$ and $\langle\nabla_XX,Y\rangle=0$.

If $\mu=0$, then without loss of generality, we assume $Y=\frac{e_0-e_0'}{\sqrt{2}}$, and (\ref{eqforumbilic1}) becomes
\begin{equation}\label{eqforumbilic2}
0=\lambda\langle\nabla_XX,Y\rangle+\frac{1}{2}|\pi_1X|^2-\frac{1}{2}|\pi_2X|^2.
\end{equation}
Let $M_t:=E_t(M)$, $p_t:=E_t(p)$, $X_t$ the parallel transport of $X$ along the geodesic line $t\mapsto p_t$. 
Without loss of generality, suppose $\lambda=0$ on $M_+$, and $E_T(p)\in M_+$. 
Since the diagonal-property of $A$ in (\ref{Ricatti}) is preserved by parallel transport, 
$X_t$ is parallel to $(E_t)_*X|_p$, also a principal vector, unless $M_t$ is a focal submanifold of $M$. 
Since $X_T\in N_{p_T}M_+$, which is spaned by $\{q,\frac{e_0|_q+e_0'|_q}{\sqrt{2}}\}$, 
where there exists $t$ such that $exp_qt\nu=p_T$, $|\pi_1X_T|=|\pi_2X_T|$.
By the symmetry of two entries, the parallel transport of $X_T$ along $\nu$, $X_t$, satisfies that $|\pi_1X_t|=|\pi_2X_t|$, 
in particular, $|\pi_1X|=|\pi_2X|$.
By (\ref{eqforumbilic2}), $\langle\nabla_XX,Y\rangle=0$.
Therefore, $\nabla_XX$ falls in $T_\lambda$, and any integral submanifold of $T_\lambda$ is totally umbilic.
\end{proof}

\subsection{Homogeneity}

By the classification results of compact reducible linear groups of cohomogeneity $3$ in \cite{uchida1980lineargroup}
which modifies the results of Hsiang and Lawson \cite{hsiang1971minimal},
any linear group action on $\mathbb{R}^{n+1}\times\mathbb{R}^{n+1}$
preserving $S^n\times S^n$,
the restriction of which to $S^n\times S^n$ has codimension-$1$ principal orbits,
is one of
\begin{itemize}
\item $SO(n+1)$ with action $A.(x,y)=(Ax,Ay)$, $n\ge1$,
\item $U(n+1)\times S^1$ with action $(A,a).(z,w)=(Aza,Aw\overline{a})$, $n\ge 1$,
\item $Sp(n+1)\times Sp(1)$ with action $(A,a).(p,q)=(Apa,Aqa^*)$, $n\ge 1$.
\end{itemize}
Obviously, the orbit foliations of the above actions are exactly the level set foliations of
$F_r$,$F_c$,and $F_h$ in Subsection \ref{egr} and Subsection \ref{egcho}, respectively.
Therefore, the level sets foliations of $F_r$,$F_c$,and $F_h$ are
all the homogeneous isoparametric foliations of $S^n\times S^n$.
Any another isoparametric foliation not congruent to one of them with respect to $S^n\times S^n$ is not homogeneous.

\section{Classification of Isoparametric Foliations in $S^n\times S^n$ of Degree $2$ and $4$}\label{classification}

\subsection{The case of bi-degree $(1,1)$}\label{class(1,1)}

Let $F$ be an isoparametric polynomial for $S^n\times S^n$ of bi-degree $(1,1)$
with $\mathrm{im} F|_{S^n\times S^n}=[-1,1]$. By Lemma \ref{isoparabebihomo},
its restriction to each $x$-section and each $y$-section takes $[-1,1]$ in the unit sphere,
and then by Lemma \ref{CM}
$$|\mathrm{grad}^x\,F|^2=|y|^2,\quad|\mathrm{grad}^y\,F|^2=|x|^2.$$
On the other hand, there is a real $(n+1)\times(n+1)$-matrix $A$ such that $F(x,y)=x^tAy$,
and $$|\mathrm{grad}^x\,F|^2=|Ay|^2,\quad|\mathrm{grad}^y\,F|^2=|A^tx|^2.$$
Hence $A$ is an orthogonal matrix.
Change the coordinate via $(Ax',y')=(x,y)$, then $F=x'^tA^tAy'=x'^ty'$.
Therefore, up to congruence with respect to $S^n\times S^n$ and scaling,
$F(x,y)=x^ty$ is the only isoparametric polynomial of bi-degree $(1,1)$,
and all the isoparametric foliations of $S^n\times S^n$ of bi-degree $(1,1)$ are those ones
determined by $F_r$ in Subsection \ref{egr}.

\subsection{The case of bi-degree $(2,2)$}

\begin{lemma}\label{nablasqlemma1}
Let $M$ be an isoparametric hypersurface in $S^n\times S^n$ of bi-degree $( 2,2)$,
then the isoparametric polynomial $F$ determining it
can be taken to satisfy
\begin{equation}\label{varCMeq}
\left\{\begin{aligned}
&|\mathrm{grad}\,F|^2=4F(|x|^2+|y|^2),\\
&\Delta\,F=c(|x|^2+|y|^2).
\end{aligned}\right.
\end{equation}
\end{lemma}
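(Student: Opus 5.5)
The approach is to start from the polynomial $P$ of bi-degree $(2,2)$ supplied by Lemma \ref{isoparabebihomo}, normalized so that $\mathrm{im}\,P|_{S^n\times S^n}=[-1,1]$, and replace it by $F:=\frac{1+P}{2}$, or rather its bi-homogenization
\[
F(x,y)=\tfrac12\bigl(|x|^2|y|^2+P(x,y)\bigr).
\]
This $F$ has the same level sets as $P$ on $S^n\times S^n$ and satisfies $\mathrm{im}\,F|_{S^n\times S^n}=[0,1]$. The point of the shift is that for $g=2$ the Cartan--M\"unzner polynomial $f$ on a section satisfies $|\mathrm{grad}\,f|^2=4|x|^2$. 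Hence $h:=\frac12(|x|^2+f)$ satisfies
\[
|\mathrm{grad}\,h|^2=\tfrac14\bigl(4|x|^2+4\langle x,\mathrm{grad}\,f\rangle+4|x|^2\bigr)=|x|^2+f=2h\cdot\tfrac{2}{|x|^2}\cdot\tfrac{|x|^2}{2},
\]
which reads $|\mathrm{grad}\,h|^2=4h$ on $|x|=1$, and homogeneously $|\mathrm{grad}\,h|^2=4h\,|x|^2/|x|^2$ after adjusting degrees. So each section restriction is a quadratic whose gradient square is linear in itself.

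\textbf{Step 1: gradient identity.} Fix $y$ with $|y|=1$. By item 3 of Lemma \ref{isoparabebihomo}, $x\mapsto P(x,y)$ is a Cartan--M\"unzner polynomial with $g=2$, so $|\mathrm{grad}^xP|^2=4|x|^2$ and $\Delta^xP=c_y$. Using Euler's relation $\langle x,\mathrm{grad}^xP\rangle=2P$, a direct computation gives $|\mathrm{grad}^xF|^2=4F|y|^2$ when $|y|=1$. Since both sides are bi-homogeneous of the same bi-degree $(2,4)$, the identity extends to all $(x,y)$. Symmetrically, $|\mathrm{grad}^yF|^2=4F|x|^2$. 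Adding the two yields the first line of (\ref{varCMeq}).

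\textbf{Step 2: Laplacian.} On sections, $\Delta^xP=c_y$ with $c_y=4(m_-(y)-m_+(y))/2$ by Lemma \ref{CM}. The multiplicities are those of the section foliations, which were identified in the proof of Lemma \ref{isoparabebihomo} as $m_\pm$ through the dimension count of $M_\pm\cap(S^n\times\{y\})$. So $c_y$ is a constant $c_1$ independent of $y$, and homogeneity gives $\Delta^xP=c_1|y|^2$. Then
\[
\Delta^xF=\tfrac12\bigl(2(n+1)|y|^2+c_1|y|^2\bigr),
\]
which is a constant multiple of $|y|^2$. Similarly $\Delta^yF=c_2'|x|^2$.

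\textbf{Main obstacle: matching the constants.} The constants $c_1'$ and $c_2'$ must coincide so that $\Delta F=c(|x|^2+|y|^2)$. The multiplicity pairs on $x$-sections and $y$-sections are $(m_+,m_-)$ and $(m_+,m_-)$ again, since the dimension computation in Lemma \ref{isoparabebihomo} gave the same $n-1-m_\pm$ for both kinds of sections. The only danger is an ordering swap between the two kinds of sections, which would flip the sign of $m_--m_+$. I would rule this out by noting that $M_+=F^{-1}(1)$ meets both kinds of sections in its focal set of codimension $m_++1$. If instead $m_+\ne m_-$ appeared swapped, the codimension count for $M_+\cap$ section would be contradicted. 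Granting this, $c_1'=c_2'=:c$ and the lemma follows.
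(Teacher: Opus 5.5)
Your proof is correct and is essentially the paper's: the paper also takes $F=\frac12(F_0+|x|^2|y|^2)$ and gets $|\mathrm{grad}\,F|^2=4(|x|^2+|y|^2)F$ from the section Cartan--M\"unzner equations and Euler's relation. For the Laplacian, however, the paper simply asserts that $\Delta F_0$ is a multiple of $|x|^2+|y|^2$; your observation that $x$-sections and $y$-sections carry the same multiplicities $(m_+,m_-)$ at corresponding focal levels, which forces the two section constants to agree, is exactly the justification the paper leaves implicit. The only blemish is the heuristic display in your opening paragraph, where $|\mathrm{grad}\,h|^2$ should be $2|x|^2+2f=4h$ rather than $|x|^2+f$; Step 1 itself does this computation correctly.
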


\begin{proof}
Let $F_0$ be the polynomial $F$ in Lemma \ref{isoparabebihomo},
then the restriction of $F_0$ to each $x$-section or $y$-section of $S^n\times S^n$ is
an polynomial satisfies the Cartan-M\"{u}nzner equations.
Apply Cartan-M\"{u}nzner equations to $x$-sections and $y$-sections of $S^n\times S^n$ to obtain
\begin{gather*}
|\mathrm{grad}^x\,F_0|^2=4|x|^2|y|^4,\quad|\mathrm{grad}^y\,F_0|^2=4|y|^2|x|^4,\\
|\mathrm{grad}\,F_0|^2=4(|x|^2+|y|^2)|x|^2|y|^2.
\end{gather*}
Define $F=(F_0+|x|^2|y|^2)/2$, and compute
$$\begin{aligned}
|\mathrm{grad}\,F|^2=&\frac{1}{4}|\mathrm{grad}\,F_0|^2
+2\langle\mathrm{grad}\,\frac{F_0}{2},\mathrm{grad}\,(\frac{|x|^2|y|^2}{2})\rangle+\frac{1}{4}|\mathrm{grad}\,(|x|^2|y|^2)|^2\\
=&(|x|^2+|y|^2)(|x|^2|y|^2+2F_0+|x|^2|y|^2)\\
=&4(|x|^2+|y|^2)F.
\end{aligned}$$
Since $\Delta\,F_0$ and $\Delta |x|^2|y|^2$ are all multiples of $(|x|^2+|y|^2)$, $\Delta\,F$ is also.
\end{proof}

A polynomial of bi-degree $(2,2)$ satisfying (\ref{varCMeq}) is isoparametric for $S^{2n+1}$.
By the classification result of isoparametric functions on spheres,
such a polynomial is congruent to one of the followings with respect to $S^{2n+1}$,
\begin{itemize}
\item $F=\sum_{i=1}^m\langle P_ix,x\rangle^2$, where $x\in\mathbb{R}^{2m+2}$,
$P_i=\psi(e_i)$, $\psi$ is a representation of the Clifford algebra $Cl_m^*$;
\item $F=|x|^4-\sum_{i=1}^m\langle P_ix,x\rangle^2$, where $x\in\mathbb{R}^{2m+2}$,
$P_i=\psi(e_i)$, $\psi$ is a representation of the Clifford algebra $Cl_m^*$;
\item $F=\frac{1}{4}\mathrm{tr}\,(X\overline{X})^2-\frac{1}{4}(\mathrm{tr}\,X\overline{X})^2$,
where $X\in\mathbf{so}(5,\mathbb{F})$, $\mathbb{F}=\mathbb{R}$ or $\mathbb{C}$;
\item $F=\frac{1}{8}(\mathrm{tr}\,X\overline{X})^2-\frac{1}{4}\mathrm{tr}\,(X\overline{X})^2$,
where $X\in\mathbf{so}(5,\mathbb{F})$, $\mathbb{F}=\mathbb{R}$ or $\mathbb{C}$;
\item $F=\frac{1}{4}(|u|^2-|v|^2)^2$ with $u=(x^0,\dots,x^m)$, $v=(x^{m+1},\dots,x^{2n+2})$;
\item $F=|u|^2|v|^2$ with $u=(x^0,\dots,x^m)$, $v=(x^{m+1},\dots,x^{2n+2})$.
\end{itemize}

Therefore, the classification of isoparametric foliations in $S^n\times S^n$ of bi-degree $(2,2)$
is reduced to the classification of
congruence classes of bi-homogeneous $F(Ax)$ with respect to $S^n\times S^n$,
where $F$ is one of the above items and $A\in O(2n+2)$.
In the followings, for each of the above items,
we find out all coordinate changes making it of bi-degree $(2,2)$,
and determining corresponding isoparametric foliations.

\

\textbf{Case I}, $F=\sum_{i=1}^m\langle P_ix,x\rangle^2$, where $P_i=\psi(e_i)$, $\psi$ is a representation of
the Clifford algebra $Cl_m^*$.

Representations in the following discussion are real representions. 

We call a $(2n+2)\times(2n+2)$-matrix quasi skew diagonal
(abbr. QSD) if it is of the form $\begin{bmatrix}&A\\B&\end{bmatrix}$,
where $A$ and $B$ are $(n+1)\times(n+1)$-matrices.

\begin{lemma}\label{functiontomatrix}
Let $F=\sum_{i=1}^m\langle P_ix,x\rangle^2$ be an isoparametric polynomial of FKM-type,
$A$ an orthogonal matrix. Then $F(Ax)$ is bi-homogeneous if and only if $A^tP_iA$ is QSD for any $i$.
\end{lemma}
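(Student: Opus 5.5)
The plan is to argue directly with the block decomposition of the symmetric matrices $Q_i:=A^tP_iA$. Since $A$ is orthogonal, the $Q_i$ are again symmetric and satisfy the Clifford relations $Q_iQ_j+Q_jQ_i=2\delta_{ij}I$. We have $F(AX)=\sum_{i=1}^m(X^tQ_iX)^2$ for $X=\begin{bmatrix}x\\y\end{bmatrix}$. Write
$$Q_i=\begin{bmatrix}B_i&C_i\\C_i^t&D_i\end{bmatrix},$$
with $B_i,D_i$ symmetric $(n+1)\times(n+1)$ blocks. Then
$$X^tQ_iX=x^tB_ix+2x^tC_iy+y^tD_iy.$$

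For the ``if'' direction, assume every $Q_i$ is QSD, i.e.\ $B_i=D_i=0$. Then each $X^tQ_iX=2x^tC_iy$ is bilinear, of bi-degree $(1,1)$. Its square has bi-degree $(2,2)$, so $F(AX)$ is bi-homogeneous of bi-degree $(2,2)$.

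For the ``only if'' direction, suppose $F(AX)$ is bi-homogeneous of bi-degree $(r,s)$. Since $F$ is homogeneous of degree $4$, we have $r+s=4$.

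First treat the case $r,s\ge1$, which includes the case $(2,2)$ relevant here by Lemma \ref{isoparabebihomo}. Bi-homogeneity with $r\ge1$ forces $F(A(0,y))=0$, and $s\ge1$ forces $F(A(x,0))=0$. These restrictions are
$$\sum_i(y^tD_iy)^2=0,\qquad \sum_i(x^tB_ix)^2=0.$$
Hence each quadratic form $x^tB_ix$ and $y^tD_iy$ vanishes identically. Because $B_i$ and $D_i$ are symmetric, this gives $B_i=D_i=0$, so every $Q_i$ is QSD.

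It remains to exclude the degenerate bi-degrees $(4,0)$ and $(0,4)$; by symmetry take $(4,0)$. Then $G(x,y):=F(A(x,y))$ does not depend on $y$. The FKM polynomial satisfies the Cartan--M\"unzner-type identity $|\mathrm{grad}\,F|^2=16|X|^2F$, which is invariant under the orthogonal change $X\mapsto AX$, so the same identity holds for $G$. But $\mathrm{grad}\,G$ has only $x$-components, which depend only on $x$, so the left side is independent of $y$. The right side $16(|x|^2+|y|^2)G(x)$ depends on $y$ unless $G\equiv0$. This forces $F\equiv0$, which is absurd since $m\ge1$ and $F(X)\ge(X^tP_1X)^2\not\equiv0$. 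An alternative, closer to the first case: setting $x=0$ gives $D_i=0$, and then the $x$-independence of the $(2,2)$-part should give $C_i=0$, contradicting $Q_i^2=I$. I would use whichever is cleaner, preferably the gradient identity.

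The main obstacle is only this bookkeeping of possible bi-degrees. In the paper's context the bi-degree is $(2,2)$ by Lemma \ref{isoparabebihomo}, and then the argument reduces to the one-line observation that a sum of squares of quadratic forms vanishes only if each form vanishes.
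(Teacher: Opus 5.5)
Your argument is correct and uses the same idea as the paper's proof. The pure-$x$ (resp.\ pure-$y$) quartic part of $F(AX)$ is $\sum_i(x^tB_ix)^2$ (resp.\ $\sum_i(y^tD_iy)^2$), a sum of squares that cannot cancel; the paper extracts it by diagonalizing a nonzero diagonal block and reading off the positive coefficient of $(x^l)^4$, while your restriction to $y=0$ and $x=0$ does the same thing more cleanly. Your gradient-identity argument also rules out the bi-degrees $(4,0)$ and $(0,4)$, which the paper's argument tacitly ignores, since an $(x^l)^4$ monomial only contradicts bi-degree $(2,2)$.
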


\begin{proof}
($\Leftarrow$) Obviously.

($\Rightarrow$) If there is a $P_i$ not QSD, then by a orthogonal coordinate change
via an orthogonal matirx $\begin{bmatrix}A&\\&B\end{bmatrix}$, which dose not change whether $P_i$ is QSD,
we factor out $(x^l)^4$ from $\langle P_ix,x\rangle^2$ with positive coefficient for some $l$.
Since we cannot factor out $(x^l)^4$ from other terms with negative coefficient, $F$ is not bi-homogeneous.
\end{proof}

The RHS of Lemma \ref{functiontomatrix} $\iff$ $A^tPA$ is QSD for any $P$ in the Clifford sphere
$\Sigma(P_1,\dots,P_m)=\{a_1P_1+\cdots+a_mP_m|a_1^2+\cdots+a_m^2=1\}$.

\begin{lemma}\label{rotationuniqueness}
Let $\Psi$ be a representation of a Clifford algebra $Cl_m^*$ with each $\Psi(e_i)$ QSD, Let
$$\Sigma=\{\begin{bmatrix}A&\\&B\end{bmatrix}X
\begin{bmatrix}A^t&\\&B^t\end{bmatrix}|A,B\in O(n+1),X\in\Sigma(\Psi(e_1),\dots,\Psi(e_n))\},$$
then the subgroup of $O(2n+2)$ fixing $\Sigma$ is $\begin{bmatrix}O(n+1)&\\&O(n+1)\end{bmatrix}\rtimes
\{\begin{bmatrix}I&\\&I\end{bmatrix},\begin{bmatrix}&I\\I&\end{bmatrix}\}$.
\end{lemma}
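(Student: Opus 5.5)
The plan is to describe $\Sigma$ intrinsically and then recover the splitting $\mathbb{R}^{n+1}\oplus\mathbb{R}^{n+1}$ from $\Sigma$ alone. Here ``fixing $\Sigma$'' means $g\Sigma g^t=\Sigma$. Every $X\in\Sigma(\Psi(e_1),\dots,\Psi(e_m))$ is a symmetric matrix with $X^2=I$, since the $\Psi(e_i)$ are symmetric, square to $I$ and anticommute. Such an $X$ is also QSD, so it has the form $X=\begin{bmatrix}&C\\C^t&\end{bmatrix}$ with $CC^t=I$. Conjugating by $\mathrm{diag}(A,B)$ replaces $C$ by $ACB^t$, and this runs over all of $O(n+1)$. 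Hence
$$\Sigma=\Big\{\begin{bmatrix}&C\\C^t&\end{bmatrix}\ \Big|\ C\in O(n+1)\Big\},$$
which is independent of $\Psi$. In particular the linear span of $\Sigma$ is the space $\mathcal{S}$ of all symmetric QSD matrices, because $O(n+1)$ spans $M_{n+1}(\mathbb{R})$.

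The inclusion $\supseteq$ is immediate from this description. For $\mathrm{diag}(A,B)$ it is just the computation above. For the swap $\begin{bmatrix}&I\\I&\end{bmatrix}$, conjugation sends $C$ to $C^t\in O(n+1)$.

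For $\subseteq$, let $J=\mathrm{diag}(I_{n+1},-I_{n+1})$. The key claim is that the only matrices $K$ with $KX=-XK$ for all $X\in\Sigma$ are the multiples $\lambda J$. To see this, write $K=\begin{bmatrix}a&b\\c&d\end{bmatrix}$. The anticommutation gives two identities for all $C\in O(n+1)$:
$$aC+Cd=0,\qquad bC^t+Cc=0.$$
Taking $C=I$ gives $d=-a$ and $c=-b$. The first identity then says $aC=Ca$ for all $C\in O(n+1)$, so $a$ is a scalar $\lambda$. The second says $bC^t=Cb$ for all $C\in O(n+1)$. Testing on reflections (which are symmetric) shows that $b$ commutes with all reflections, so $b=\mu I$. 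Then $\mu C^t=\mu C$ for every $C\in O(n+1)$. Since a non-symmetric orthogonal matrix exists when $n+1\ge2$, this forces $\mu=0$.

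Now suppose $g\Sigma g^t=\Sigma$. Conjugating the relation $JX=-XJ$ by $g$ shows that $gJg^t$ anticommutes with all of $\Sigma$, so $gJg^t=\lambda J$. Since $gJg^t$ is an orthogonal involution, $\lambda=\pm1$. If $\lambda=1$, then $g$ preserves the $\pm1$-eigenspaces $\mathbb{R}^{n+1}\times0$ and $0\times\mathbb{R}^{n+1}$ of $J$, so $g$ is block diagonal with orthogonal blocks. If $\lambda=-1$, then $g$ interchanges the two eigenspaces, and $g\begin{bmatrix}&I\\I&\end{bmatrix}$ is block diagonal.

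The only real work is the anticommutant computation. The delicate points there are using enough of $O(n+1)$ to pin down $a$ and $b$, and the low-dimensional case $n+1=1$, where the argument above does not exclude $b\neq0$. That degenerate case should be excluded or handled separately; for $n\ge1$ the argument goes through as stated.
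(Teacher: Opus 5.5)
Your proof is correct for $n\ge1$, but it takes a different route from the paper's.

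\textbf{The paper's route.} It works with the linear span of $\Sigma$, which it shows is all symmetric QSD matrices. It then uses the block-diagonal symmetries (an SVD) to normalize the off-diagonal block $A_{12}$ of a symmetry $A$ to a diagonal $\Lambda$. Testing the vanishing of the upper-left block $\Lambda X^tA_{11}^t+A_{11}X\Lambda$ on $X=E_{ij}$ gives $\Lambda=0$ or $A_{11}=0$, and orthogonality of $A$ then forces $A$ to be block diagonal or block anti-diagonal.

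\textbf{Your route.} You first identify $\Sigma$ exactly as $\{\begin{bmatrix}&C\\C^t&\end{bmatrix} : C\in O(n+1)\}$, independent of $\Psi$. You then recover the splitting intrinsically, as the eigenspaces of the grading $J=\mathrm{diag}(I,-I)$, which you characterize up to scale as the anticommutant of $\Sigma$. Any symmetry must then satisfy $gJg^t=\pm J$.

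\textbf{What each buys.} Your argument is more conceptual. It shows directly that the stabilizer of $\Sigma$ is the stabilizer of the unordered pair $\{\mathbb{R}^{n+1}\times0,\,0\times\mathbb{R}^{n+1}\}$, and it avoids both the SVD normalization and the entrywise computation. The cost is the restriction $n\ge1$ in the anticommutant step, as you noted. The paper's computation has no such restriction and works verbatim for $n=0$, where the lemma is still true: the stabilizer is the dihedral group of order $8$.

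\textbf{Two small remarks.}
\begin{enumerate}
\item Both arguments use that the $\Psi(e_i)$ are symmetric. This is standard in the FKM setting and is implicit in the paper writing elements of $\Sigma$ as $\begin{bmatrix}&X\\X^t&\end{bmatrix}$.
\item You can weaken your hypothesis from $g\Sigma g^t=\Sigma$ to $g\Sigma g^t\subseteq\Sigma$, which is all the paper's proof actually uses. Conjugate by $g^t$ instead: since $gXg^t\in\Sigma$ anticommutes with $J$, every $X\in\Sigma$ anticommutes with $g^tJg$.
\end{enumerate}
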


\begin{proof}
Obviously, $\begin{bmatrix}O(n+1)&\\&O(n+1)\end{bmatrix}\rtimes
\{\begin{bmatrix}I&\\&I\end{bmatrix},\begin{bmatrix}&I\\I&\end{bmatrix}\}$ does fix $\Sigma$.
Then we prove they are all the elements in $O(2n+2)$ fixing $\Sigma$.

If $\Sigma$ includes $\begin{bmatrix}&X\\X^t&\end{bmatrix}$, then it also includes
$\begin{bmatrix}&PXQ^t\\QX^tP^t&\end{bmatrix}$ for any $P,Q\in O(n+1)$,
and in particular $\begin{bmatrix}&X\\X^t&\end{bmatrix}$ with $X$ a permutation matrix
or a permutation matrix with a $1$ replaced by $-1$.
Hence the linear subspace of $\mathbf{gl}(2n+2)$ generated by $\Sigma$ includes all QSD matrices in $\mathbf{gl}(2n+2)$.

For $P,Q\in O(n+1)$, $\begin{bmatrix}P&\\&Q\end{bmatrix}\Sigma\begin{bmatrix}P^t&\\&Q^t\end{bmatrix}=\Sigma$,
and $$A\Sigma A^{-1}=\Sigma\iff\begin{bmatrix}P&\\&Q\end{bmatrix}A\begin{bmatrix}P^t&\\&Q^t\end{bmatrix}
\Sigma\begin{bmatrix}P&\\&Q\end{bmatrix}A^t\begin{bmatrix}P^t&\\&Q^t\end{bmatrix}=\Sigma.$$
Noting that$$\begin{bmatrix}P&\\&Q\end{bmatrix}\begin{bmatrix}A_{11}&A_{12}\\A_{21}&A_{22}\end{bmatrix}\begin{bmatrix}P^t&\\&Q^t\end{bmatrix}
=\begin{bmatrix}PA_{11}P^t&PA_{12}Q^t\\PA_{21}P^t&QA_{22}Q^t\end{bmatrix},$$
without loss of generality, we can assume that $A_{12}=:\Lambda$ is diagonal, then
\begin{equation*}
\begin{aligned}
&\begin{bmatrix}A_{11}&\Lambda\\A_{21}&A_{22}\end{bmatrix}
\begin{bmatrix}&X\\X^t&\end{bmatrix}
\begin{bmatrix}A_{11}^t&A_{21}^t\\\Lambda&A_{22}^t\end{bmatrix}\\
=&\begin{bmatrix}\Lambda X^tA_{11}^t+A_{11}X\Lambda&\Lambda X^tA_{21}^t+A_{11}XA_{22}^t\\
A_{22}X^tA_{11}^t+A_{21}X\Lambda&A_{22}X^tA_{21}^t+A_{21}XA_{22}^t\end{bmatrix}.
\end{aligned}
\end{equation*}
If RHS is QSD, then $\Lambda X^tA_{11}^t$ is skew-symmetric
for any $\begin{bmatrix}&X\\X^t&\end{bmatrix}$ in the linear subspace of $\mathbf{gl}(2n+2)$ generated by $\Sigma$.
If $\Lambda=0$, then $A\in O(2n+2),\Longrightarrow A_{11}$ and $A_{22}$ are orthogonal,
$\Longrightarrow A=\begin{bmatrix}A_{11}&\\&A_{22}\end{bmatrix}$.
If $\Lambda\ne0$, then taking $X=E_{ij}$ ($i,j=1,\dots,n+1$) in $\Lambda X^tA_{11}^t+A_{11}X\Lambda=0$,
we find that $A_{11}=0$, and again from $A\in O(2n+2)$ we know that
$A=\begin{bmatrix}&\Lambda\\A_{21}&\end{bmatrix}$.
\end{proof}

From this lemma we know that for each represnetation of $Cl_m^*$,
there is at most one orthogonal coordinate
change making $\psi(e_1),\dots,\psi(e_m)$ QSD simultaneously up to congruence with respect to $S^n\times S^n$.

Let $\psi$ be a representation of $Cl_m^*$,  we say $\psi$ is quasi-skew diagonalizable  (abbr. QSD) if
there exists an orthogonal coordinate change making $\psi(e_1),\dots,\psi(e_m)$ QSD simultaneously.

\begin{lemma}\label{QSD}
Let $\psi_m$ be the irreducible representation of $Cl_m^*$ given in Table \ref{tableeg2},
$\psi_m'$ the irreducible representation of $Cl_m^*$ given via $\psi_m'(e_i)=-\psi_m(e_i)$.
\begin{itemize}
\item[1.] An irreducible representation of $Cl_m^*$ is QSD
if and only if $m\equiv4,6,7,$ or $8\mod8$;
\item[2.] if $m\equiv4,6,7,$ or $8\mod8$ then $k\psi_m$ is QSD;
\item[3.] if $m\equiv2$ or $3\mod8$ then $k\psi_m$ is QSD if and only if $k$ is even;
\item[4.] if $m\equiv1$ or $5\mod8$ then $k_1\psi_m+k_2\psi_m'$ is QSD if and only if $k_1=k_2$.
\end{itemize}
\end{lemma}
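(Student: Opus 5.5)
The plan is to translate quasi-skew diagonalizability into an extension problem for Clifford modules and then read the answer off the table of $\delta^*(m)$.

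\emph{Reformulation.} Equip $\mathbb{R}^{2n+2}$ with an inner product for which every $P_i=\psi(e_i)$ is symmetric and orthogonal; averaging over the finite group generated by the $\pm P_i$ gives one. An orthogonal change of coordinates makes all $P_i$ simultaneously QSD exactly when there is an orthogonal splitting $\mathbb{R}^{2n+2}=V_1\oplus V_2$ with $\dim V_1=\dim V_2=n+1$ such that every $P_i$ maps $V_1$ into $V_2$ and $V_2$ into $V_1$. Let $J$ be the operator equal to $I$ on $V_1$ and $-I$ on $V_2$. Then the condition says that $J$ is a symmetric orthogonal involution with $\mathrm{tr}\,J=0$ and $JP_i=-P_iJ$ for all $i$.

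The trace condition is automatic. Since $P_1$ is invertible, $J=-P_1JP_1^{-1}$, and taking traces gives $\mathrm{tr}\,J=0$. Moreover $J$ satisfies $J^2=I$ and anticommutes with each $P_i$, so $e_{m+1}\mapsto J$ extends $\psi$ to a representation of $Cl_{m+1}^*$. Conversely, any extension of $\psi$ to $Cl_{m+1}^*$ can be made orthogonal with symmetric generators, and its $\pm1$-eigenspaces of $\psi(e_{m+1})$ give the required splitting. Hence:
\[\psi \text{ is QSD} \iff \psi \text{ is the restriction of a representation of } Cl_{m+1}^*.\]

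\emph{Item 1.} If the irreducible $\psi_m$ extends, the extension is an irreducible $Cl_{m+1}^*$-module of dimension $\delta^*(m)$. Every irreducible $Cl_{m+1}^*$-module restricts to a nonzero $Cl_m^*$-module, so it has dimension at least $\delta^*(m)$. Therefore $\psi_m$ extends if and only if $\delta^*(m+1)=\delta^*(m)$.

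The table gives the transitions $1\to2$, $2\to4$, $4\to8$, $8\to8$, $8\to16$, $16\to16$, $16\to16$, $16\to16\delta^*(1)=16$ for $m=1,\dots,8$, and this pattern repeats with period $8$. So $\delta^*(m+1)=\delta^*(m)$ exactly when $m\equiv4,6,7,8\pmod 8$. In the remaining cases $\delta^*(m+1)=2\delta^*(m)$.

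\emph{Item 2.} This follows because a direct sum of QSD representations is QSD.

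\emph{Items 3 and 4.} Here $\delta^*(m+1)=2\delta^*(m)$, so every $Cl_{m+1}^*$-module restricts to a sum of pairs of irreducible $Cl_m^*$-modules.

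For $m\equiv2,3 \pmod 8$, $Cl_m^*$ has a unique irreducible module, so each irreducible $Cl_{m+1}^*$-module restricts to $2\psi_m$. Hence $k\psi_m$ extends if and only if $k$ is even, with sufficiency coming from summing copies of $2\psi_m$.

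For $m\equiv1,5 \pmod 8$, $m$ is odd, so the element $\omega=e_1\cdots e_m$ is central in $Cl_m^*$. Its eigenvalues on $\psi_m$ and on $\psi_m'$ are opposite, which is how the two irreducible modules are distinguished. Any $J$ anticommuting with all $P_i$ anticommutes with $\psi(\omega)$, because $m$ is odd. Thus $J$ interchanges the two eigenspaces of $\psi(\omega)$, which forces $k_1=k_2$. Conversely, on $\psi_m\oplus\psi_m'$ with $P_i$ acting as $(u,v)\mapsto(P u,-P v)$, the swap $J(u,v)=(v,u)$ anticommutes with every $P_i$. This shows $\psi_m+\psi_m'$ is QSD, and sums of such give $k\psi_m+k\psi_m'$.

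\emph{Main obstacle.} I expect the hardest step to be the identification of the central element $\omega$ and its eigenvalue sign as the invariant separating $\psi_m$ from $\psi_m'$ when $m\equiv1,5 \pmod 8$. For this I would verify directly from Table \ref{tableeg2} that $\psi_m(\omega)=\pm I$ and that $\psi_m'(\omega)=-\psi_m(\omega)$.

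A secondary point to check is that, when $\psi$ is QSD, the splitting subspaces can be chosen orthogonal with respect to the standard inner product. This holds because after the normalization the $P_i$ are symmetric, so any extension $J$ can be taken symmetric and its eigenspaces are orthogonal.
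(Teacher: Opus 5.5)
Your proof is correct, and it organizes the argument more uniformly than the paper does. The paper uses your extension idea only for the negative half of item 1: a QSD irreducible module together with $e_{m+1}\mapsto\mathrm{diag}(I,-I)$ would be a $\delta^*(m)$-dimensional $Cl_{m+1}^*$-module. For the positive half it writes down explicit QSD matrices $\psi(e_1)=\begin{bmatrix}&I\\I&\end{bmatrix}$, $\psi(e_i)=\begin{bmatrix}&\phi_{m-1}(e_{i-1})\\-\phi_{m-1}(e_{i-1})&\end{bmatrix}$ built from Table \ref{tableeg1}. For the parity in items 3 and 4 it puts a QSD representation into the normal form $P_m=\begin{bmatrix}&I\\I&\end{bmatrix}$, $P_i=\begin{bmatrix}&E_i\\-E_i&\end{bmatrix}$, with $(E_i)$ a $Cl_{m-1}$-module, and then uses $\delta(m-1)=\delta^*(m)$.

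You instead prove the full equivalence ``QSD $\iff$ extends to $Cl_{m+1}^*$'' and derive items 1--3 from restrictions of $Cl_{m+1}^*$-modules. This needs only the $\delta^*$ table and no explicit matrices. The paper's route, in exchange, produces the explicit $Cl_{m-1}$ normal form, which is exactly the shape of formula (\ref{cuieg}). The remaining pieces coincide. The paper's $\mathrm{tr}\,P_1\cdots P_m=0$ is the trace version of your observation that $J$ swaps the eigenspaces of $\psi(\omega)$. Its sufficiency via the examples $\begin{bmatrix}&P\\P&\end{bmatrix}$ is your swap $J$ written in another basis.

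The step you single out as the main obstacle is routine and needs no table check. Since $\omega$ is central with $\omega^2=(-1)^{m(m-1)/2}=1$ for $m\equiv1,5\pmod 8$, its eigenspaces are submodules, so $\psi_m(\omega)=\pm I$ by irreducibility. Directly from the definition, $\psi_m'(\omega)=(-1)^m\psi_m(\omega)=-\psi_m(\omega)$.

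What does deserve a line is symmetrizing $J$ while holding the $P_i$ fixed. QSD refers to the standard inner product, in which the $P_i$ are already symmetric and orthogonal, so there is no need to change the inner product for them. Average the inner product over the finite group generated by $\pm P_i,\pm J$; the resulting Gram matrix $G$ commutes with each $P_i$. Then $G^{1/2}JG^{-1/2}$ is a symmetric involution that still anticommutes with the $P_i$.

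Finally, the converse direction of item 1 uses that $Cl_m^*$ has a unique irreducible module for $m\equiv4,6,7,8\pmod 8$. That is what guarantees the restricted $\delta^*(m)$-dimensional module is $\psi_m$.
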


\begin{proof}
If $\delta^*(m+1)=\delta^*(m)$, then by checking Table \ref{tableeg1} and Table \ref{tableeg2}, we know that
$$\psi(e_1)=\begin{bmatrix}&I\\I&\end{bmatrix},\quad
\psi(e_i)=\begin{bmatrix}&\phi_{m-1}(e_{i-1})\\ -\phi_{m-1}(e_{i-1})&\end{bmatrix}$$
gives an irreducible representation of $Cl_m^*$ which is QSD.
On the other hand, if $\delta^*(m+1)>\delta^*(m)$, and there is an irreducible representation of $Cl_m^*$ given by
$$e_1\mapsto\begin{bmatrix}&Q_1\\R_1&\end{bmatrix},\dots,e_m\mapsto\begin{bmatrix}&Q_m\\R_m&\end{bmatrix},$$
then
$$e_1\mapsto\begin{bmatrix}&Q_1\\R_1&\end{bmatrix},\dots,e_m\mapsto\begin{bmatrix}&Q_m\\R_m&\end{bmatrix},
e_{m+1}\mapsto\begin{bmatrix}I&\\&-I\end{bmatrix}$$
gives a $\delta^*(m)$-dimensional irreducible representation of $Cl_{m+1}^*$,
a contradiction. Since $\delta^*(m+1)=\delta^*(m)$ if and only if $m\equiv4,6,7,8\mod8$, 1.) and 2.) hold.

If $m\equiv1,2,3,5(\mathrm{mod}\,8)$ and $\psi$ is a QSD representation of $Cl_m^*$,
then by choosing a suitable basis, we can make sure that
$$P_1=\begin{bmatrix}&E_1\\-E_1&\end{bmatrix},\dots,P_{m-1}=\begin{bmatrix}&E_{m-1}\\-E_{m-1}&\end{bmatrix},
P_m=\begin{bmatrix}&I\\I&\end{bmatrix},$$
where $P_i=\psi(e_i)$, and $\{e_1\mapsto E_1,\dots,e_{m-1}\mapsto E_{m-1}\}$ stands for a representation of $Cl_{m-1}$.
Since $\dim E_i$ is a multiple of $\delta(m-1)=\delta^*(m)$,
$\dim P_i$ is an even multiple of $\delta^*(m)$, i.e. $\psi$ is the direct sum of an even number of irreducible representations.
If in addition $m\equiv1,5(\mathrm{mod}\,8)$,
since $P_1P_2\cdots P_m$ is QSD, $\mathrm{tr}\,P_1P_2\cdots P_m=0$
and that holds only when the representation is $k\psi_m+k\psi_m'$.
On the other hand, (\ref{fulleg}) are examples of $k\psi_m$ with $m\equiv2$ or $3(\mathrm{mod}\,8)$,
and examples of $k\psi_m+k\psi_m'$ with $m\equiv1$ or $5(\mathrm{mod}\,8)$. Therefore, 3.) and 4.) hold.
\end{proof}

From Lemma \ref{QSD}, we know that isoparametric polynomials for $S^n\times S^n$ of bi-degree $(2,2)$
given in Case I are exactly those given by (\ref{fulleg}) and (\ref{notfulleg}).

If $\psi$ is an $(n+1)$-dimensional (resp. $2(n+1)$-dimensional) representation of $Cl_m^*$
when $m\cong1,2,3,5\mod8$ (resp. $m\cong4,6,7,8\mod8$), then $m_+$ and $m_-$ corresponding to $F^*$
given by (\ref{fulleg}) (resp. (\ref{notfulleg})), satisfying $(m_+,m_-)=(m-1,n-m)$,
when $n\ge m$, i.e., $(m,n)\ne(2,1),(4,3)$ nor $(8,7)$.
Therefore, the foliation determined by $F^*$ is of bi-degree $(1,1)$ if and only if $(m,n)=(3,3),(7,7)$ or $m=1$.

\

\textbf{Case II}, $F=|x|^4-\sum_{i=1}^m\langle P_ix,x\rangle^2$, where $P_i=\psi(e_i)$, $\psi$ is a representation of
the Clifford algebra $Cl_m^*$.

Without loss of generality, we assume $P_1=\begin{bmatrix}I&\\&-I\end{bmatrix}$,
then $P_2,\dots,P_m$ are all $QSD$, since $P_1P_i+P_iP_1=0$ for $i=2,\dots,m$.
Then $F=4|x^{(1)}|^2|x^{(2)}|^2-\sum_{i=2}^m\langle P_ix,x\rangle^2$,
which determine the same isoprametric foliation of $S^n\times S^n$ as $F=\sum_{i=2}^m\langle P_ix,x\rangle^2$.

Therefore, isoparametric polynomials functions for $S^n\times S^n$ of bi-degree $(2,2)$ given in Case II
are included in those given in Case I.

\

\textbf{Case III}, $F=\frac{1}{4}\mathrm{tr}\,(X\overline{X})^2-\frac{1}{4}(\mathrm{tr}\,X\overline{X})^2$,
where $X\in\mathbf{so}(5,\mathbb{F})$, $\mathbb{F}=\mathbb{R}$ or $\mathbb{C}$.

Suppose eigenvalues of $X$ are $a,-a,b,-b$,
then $F=\frac{1}{4}(|a|^2-|b|^2)^2$. The level set $\{F=0\}$ is the cone of those matrices with eigenvalues
$a,-a,a,-a$, where $a\in\mathrm{i}\mathbb{R}$ when $\mathbb{F}=\mathbb{R}$, $a\in\mathbb{C}$ when $\mathbb{F}=\mathbb{C}$.
If $F$ is of bi-degree $(2,2)$, then its null set must contain
a $5$-dimensional subspace when $\mathbb{F}=\mathbb{R}$, a $10$-dimensional subspace when $\mathbb{F}=\mathbb{C}$.
Since the cone $\{F=0\}$ is $\mathrm{ISO}(F)$-transitive, the point
$$X_0=\begin{bmatrix}&1&&&\\-1&&&&\\&&&1&\\&&-1&&\\&&&&0\end{bmatrix}$$
must be contained in a $10$-dimensional (resp. $5$-dimensional) subspace on which $F$ takes $0$
when $\mathbb{F}=\mathbb{C}$ (resp. $\mathbb{R}$).

When $\mathbb{F}=\mathbb{C}$, an element in the tangent space of $\{F=0\}$ at $X_0$,
which is perpendicular to $X_0$, is of the form
$$\begin{bmatrix}
&x\mathrm{i}&z'&z&w_1\\
-x\mathrm{i}&&\overline{z}&-\overline{z'}&w_2\\
-z'&-\overline{z}&&y\mathrm{i}&w_3\\
-z&-\overline{z'}&-y\mathrm{i}&&w_4\\
-w_1&-w_2&-w_3&-w_4&
\end{bmatrix},$$
where $x,y\in\mathbb{R}$, $z,z',w_1,w_2,w_3,w_4\in\mathbb{C}$.
By direct computation, we have
\begin{equation*}\begin{aligned}
F(X_0+tX)=&4t^4|(x+y)\overline{z}\mathrm{i}+w_3\overline{w_1}+w_2\overline{w_4}|^2\\
&+4t^4|-(x+y)\overline{z'}\mathrm{i}+w_4\overline{w_1}+w_2\overline{w_3}|^2\\
&+t^4(x^2+y^2+|w_1|^2+|w_2|^2-|w_3|^2-|w_4|^2)^2.
\end{aligned}\end{equation*}
To make sure the line $X_0+\mathbb{R}X$ is in the cone $\{F=0\}$, $X$ should satisfies three independent equations
\begin{equation*}\left\{\begin{aligned}
&(x+y)\overline{z}\mathrm{i}+w_3\overline{w_1}+w_2\overline{w_4}=0,\\
&-(x+y)\overline{z'}\mathrm{i}+w_4\overline{w_1}+w_2\overline{w_3}=0,\\
&x^2+y^2+|w_1|^2+|w_2|^2-|w_3|^2-|w_4|^2=0.
\end{aligned}\right.\end{equation*}
Hence the dimension of the space of $X$ with $X_0+\mathbb{R}X$ in the cone $\{F=0\}$
is $\le14-3\times2+1=9$.

When $\mathbb{F}=\mathbb{R}$, an element in the tangent space of $\{F=0\}$ at $X_0$,
which is perpendicular to $X_0$, is of the form
$$\begin{bmatrix}
&&x'&x&y_1\\
&&x&-x'&y_2\\
-x'&-x&&&y_3\\
-x&x'&&&y_4\\
-y_1&-y_2&-y_3&-y_4&
\end{bmatrix},$$
where $x,x',y_1,y_2,y_3,y_4\in\mathbb{R}$.
By direct computation, we have
\begin{equation*}\begin{aligned}
F(X_0+tX)=&4t^4(y_1y^3-y_2y_4)^2
+4t^4(y_1y_4-y_2y_3)^2\\
&+t^4(|y_1|^2+|y_2|^2-|y_3|^2-|y_4|^2)^2\\
=&t^4(|y_1|^2+|y_2|^2+|y_3|^2+|y_4|^2)^2.
\end{aligned}\end{equation*}
Therefore, the dimension of the space of $X$ with $X_0+\mathbb{R}X$ in the cone $\{F=0\}$
is $\le2+1=3$.

Therefore, in Case III, $F$ cannot be made of bi-degree $(2,2)$ through a coordinate change.

\

\textbf{Case IV}, $F=\frac{1}{8}(\mathrm{tr}\,X\overline{X})^2-\frac{1}{4}\mathrm{tr}\,(X\overline{X})^2$,
where $X\in\mathbf{so}(5,\mathbb{F})$, $\mathbb{F}=\mathbb{R}$ or $\mathbb{C}$.

Suppose eigenvalues of $X$ are $a,-a,b,-b$,
then $F=|a|^2|b|^2$. The level set $\{F=0\}$ is the cone of those matrices with eigenvalues
$a,-a,0,0$, where $a\in\mathrm{i}\mathbb{R}$ when $\mathbb{F}=\mathbb{R}$, $a\in\mathbb{C}$ when $\mathbb{F}=\mathbb{C}$.
If $F$ is of bi-degree $(2,2)$, then its null set must contain
a $5$-dimensional subspace when $\mathbb{F}=\mathbb{R}$, a $10$-dimensional subspace when $\mathbb{F}=\mathbb{C}$.
Since the cone $\{F=0\}$ is $\mathrm{ISO}(F)$-transitive, the point
$$X_0=\begin{bmatrix}&1&&&\\-1&&&&\\&&0&&\\&&&0&\\&&&&0\end{bmatrix}$$
must be contained in a $10$-dimensional (resp. $5$-dimensional) subspace on which $F$ takes $0$
when $\mathbb{F}=\mathbb{C}$ (resp. $\mathbb{R}$).

When $\mathbb{F}=\mathbb{C}$, an element in the tangent space of $\{F=0\}$ at $X_0$,
which is perpendicular to $X_0$, is of the form
$$X=\begin{bmatrix}
&x\mathrm{i}&z_1&z_2&z_3\\
-x\mathrm{i}&&z_4&z_5&z_6\\
-z_1&-z_4&&&\\
-z_2&-z_5&&&\\
-z_3&-z_6&&&
\end{bmatrix},\text{ where $x\in\mathbb{R}$, $z_1,\dots,z_6\in\mathbb{C}$.}$$
When $\mathbb{F}=\mathbb{R}$, an element in the tangent space of $\{F=0\}$ at $X_0$,
which is perpendicular to $X_0$, is of the form
$$X=\begin{bmatrix}
&&x_1&x_2&x_3\\
&&x_4&x_5&x_6\\
-x_1&-x_4&&&\\
-x_2&-x_5&&&\\
-x_3&-x_6&&&
\end{bmatrix},\text{ where $x_1,\dots,x_6\in\mathbb{R}$.}$$
In both cases, to make sure $\mathrm{rank}(X_0+tX)=2$ for all $t\in\mathbb{R}$, $X$ must be of the form
$$\begin{bmatrix}
&a&b_1&b_2&b_3\\
-a&&&&\\
-b_1&&&&\\
-b_2&&&&\\
-b_3&&&&
\end{bmatrix}
\text{ or }
\begin{bmatrix}
&a&&&\\
-a&&b_1&b_2&b_3\\
&-b_1&&&\\
&-b_2&&&\\
&-b_3&&&
\end{bmatrix},$$
where $a\in\mathrm{i}\mathbb{R}$ when $\mathbb{F}=\mathbb{C}$, $a=0$ when $\mathbb{F}=\mathbb{R}$,
$b_1,b_2,b_3\in\mathbb{F}$.
Hence the dimension of the space of $X$ with $X_0+\mathbb{R}X$ in the cone $\{F=0\}$
is $\le7+1=7$ when $\mathbb{F}=\mathbb{C}$, $\le3+1=4$ when $\mathbb{F}=\mathbb{R}$.

Therefore, in Case IV, $F$ cannot be made of bi-degree $(2,2)$ through a coordinate change.

\

\textbf{Case V}, $F=(|u|^2-|v|^2)^2$ with $u=(x^0,\dots,x^m)$, $v=(x^{m+1},\dots,x^{2n+2})$.

If there is a coordinate change making $F$ of bi-degree $(2,2)$, then there exist
two $(n+1)$-dimensional planes perpendicular to each other on which $F$ takes $0$.
Since the null set of $F$ is $\{(u,v)||u|^2=|v|^2\}$, 
such two planes exist only if $m=n$ and the coordinate change
can only take $\left\{\begin{aligned}x=u+v,\\y=u-v,\end{aligned}\right.$ up to congruence with respect to $S^n\times S^n$.
Then $F=\langle x,y\rangle^2$, determines the same isoparametric foliation as $F_r$,
which is of bi-degree $(1,1)$.

\

\textbf{Case VI}, $F=4|u|^2|v|^2$ with $u=(x^0,\dots,x^m)$, $v=(x^{m+1},\dots,x^{2n+2})$.

As in Case V, there exists a coordinate change making $F$ of bi-degree $(2,2)$  only if there exist
two $(n+1)$-dimensional planes perpendicular to each other on which $F$ takes $0$.
Since the null set of $F$ is $\{(u,v)|u=0\text{ or }v=0\}$, 
that happens only when $m=n$ and the coordinate change takes $\left\{\begin{aligned}x=u,\\y=v,\end{aligned}\right.$
up to congruence with respect to $S^n\times S^n$.
Then $F=|x|^2|y|^2$, which is trivial.

\

In summary, all the isoparametric foliations of $S^n\times S^n$ of bi-degree $(2,2)$
are those non-degenerate ones determined by $F$ in Case I,
i.e., those Clifford-type examples given by (\ref{fulleg}) and (\ref{notfulleg}),
except for the case $(m,n)=(2,1),(3,3),(4,3),(7,7),(8,7)$ or $m=1$.
In fact, the following lemma tells us that Cui's formula (\ref{cuieg}) is a unified expression of
all the bi-homogeneous isoparametric polynomials of bi-degree $(2,2)$ for $S^n\times S^n$.

\begin{lemma}\label{cliffordegeq}
For each isoparametric poolynomial $F$ for $S^n\times S^n$ given by formula (\ref{fulleg}) or (\ref{notfulleg}),
there exists one and only one isoparametric polynomial for $S^n\times S^n$ given by formula (\ref{cuieg})
congruent to it with respect to $S^n\times S^n$.
\end{lemma}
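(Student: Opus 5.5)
Existence and uniqueness will be handled separately. For both we use the normal form obtained in the proof of Lemma \ref{QSD}. Let $\psi$ be a QSD representation of $Cl_m^*$ defining $F^*$ via (\ref{fulleg}) or (\ref{notfulleg}), with all $P_i=\psi(e_i)$ QSD.

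\textbf{Existence.} We first apply a block-diagonal orthogonal change $\mathrm{diag}(A,B)$, which is a congruence with respect to $S^n\times S^n$, to bring the $P_i$ to the form
$$P_m=\begin{bmatrix}&I\\I&\end{bmatrix},\quad P_i=\begin{bmatrix}&E_i\\-E_i&\end{bmatrix}\ (1\le i\le m-1).$$
Here $e_i\mapsto E_i$ is a representation of $Cl_{m-1}$. This works because $P_m$ is symmetric and QSD with square $I$, so its off-diagonal block $C$ is orthogonal, and conjugating by $\mathrm{diag}(I,C^t)$ makes it $\begin{bmatrix}&I\\I&\end{bmatrix}$. Each $P_i$ is symmetric and QSD and anticommutes with $P_m$, which forces its off-diagonal block to be a skew $E_i$. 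The relations $P_iP_j+P_jP_i=2\delta_{ij}I$ then translate into $E_iE_j+E_jE_i=-2\delta_{ij}I$. Substituting into $X^tP_iX$ with $X=(x,y)$ gives $X^tP_mX=2\langle x,y\rangle$ and $X^tP_iX=2\langle x,E_iy\rangle$ up to sign. Hence $F^*=4\bigl(\langle x,y\rangle^2+\sum_{i=1}^{m-1}\langle x,E_iy\rangle^2\bigr)$, which is of the form (\ref{cuieg}) up to the scaling already ignored throughout.

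In case (\ref{fulleg}) the matrices $\begin{bmatrix}&P_i\\P_i&\end{bmatrix}$ play the role of the $P_i$ above, and the same reduction applies. Conversely, every $F$ of the form (\ref{cuieg}) arises in this way by setting $P_m$ and $P_i$ as displayed. This gives a representation of $Cl_m^*$ that is QSD, so by Lemma \ref{QSD} it is covered by (\ref{fulleg}) or (\ref{notfulleg}).

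\textbf{Uniqueness.} Suppose two polynomials of the form (\ref{cuieg}), built from representations $\phi,\phi'$ of $Cl_{m-1}$, are congruent to the same $F^*$. Then they are congruent to each other with respect to $S^n\times S^n$, and in particular with respect to $S^{2n+1}$. As FKM-type polynomials for $S^{2n+1}$, their multiplicity pair $(m_+,m_-)=(m-1,n-m)$ determines $m$. The dimension determines the number of irreducible summands $k$ of $\phi$, since $\dim=k\,\delta(m-1)$.

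It then suffices to use the remark in Subsection \ref{exampleclifford}: for each $(m,k)$ there is exactly one polynomial of the form (\ref{cuieg}). This holds because replacing $\phi_{m-1}$ by $\phi'_{m-1}=-\phi_{m-1}$ does not change $\langle x,E_iy\rangle^2$, and inequivalent irreducibles occur only in that way. So $\phi$ and $\phi'$ give the same function up to the conjugation realising the equivalence $\phi\simeq\phi'$, which is a block-diagonal congruence $\mathrm{diag}(U,U)$.

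\textbf{Main obstacle.} The delicate point is the degenerate range $(m,n)=(2,1),(4,3),(8,7)$, where the multiplicity formula fails. There I would instead read $m$ off directly from $\dim\mathrm{span}\{P_i\}$. This is legitimate because Lemma \ref{rotationuniqueness} shows the QSD normalisation is unique up to the congruence group, so the Clifford sphere $\Sigma$ attached to $F^*$ is an invariant of the congruence class.
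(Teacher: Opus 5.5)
\textbf{Existence.} Your existence argument is correct, and it takes a more direct route than the paper. You bring an arbitrary QSD Clifford system, by a block-diagonal change, to the form $P_m=\begin{bmatrix}&I\\I&\end{bmatrix}$, $P_i=\begin{bmatrix}&E_i\\-E_i&\end{bmatrix}$, and read off $F^*=4\bigl(\langle x,y\rangle^2+\sum\langle x,E_iy\rangle^2\bigr)$. The paper instead gets ``$F_{m,k}$ is congruent to some $F^*_{m',k'}$'' from the classification of bi-degree $(2,2)$ polynomials, and then matches the indices.

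\textbf{The gap is in uniqueness.} What you must exclude is that two polynomials of the form (\ref{cuieg}) with different $m$ are congruent. Your justification for reading off $m$ in the range $(m,n)=(2,1),(4,3),(8,7)$ does not work. Lemma \ref{rotationuniqueness} only computes the stabiliser in $O(2n+2)$ of a \emph{given} saturated Clifford sphere; that is what makes the QSD normalisation of a fixed representation unique. It says nothing about whether the polynomial $F^*$ determines its Clifford system, or even the number of matrices in it.

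In the degenerate range the polynomial does not even determine the span of its Clifford system. For $(m,n)=(2,1)$, the symmetric QSD systems $\begin{bmatrix}&C\\C^t&\end{bmatrix}$ with $C$ running over $\{I,G\}$, respectively over $\{E,F\}$ (the $2\times2$ matrices of Subsection \ref{repofclifford}), both satisfy the $Cl_2^*$ relations. Both give $4|x|^2|y|^2$, yet they span different planes. So ``$\Sigma$ is an invariant of the congruence class'' is precisely what would have to be proved, and the cited lemma does not prove it.

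The multiplicity argument also has loose ends. For $m=1$ the polynomial $\langle x,y\rangle^2$ is the square of a $g=2$ polynomial, not an FKM family with $g=4$. Moreover, $(m-1,n-m)$ are the multiplicities of the section foliations; the FKM multiplicities on $S^{2n+1}$ are $(m-1,n-m+1)$.

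\textbf{Repair.} The fix is immediate and is exactly the paper's argument. The Laplacian is invariant under $O(2n+2)$: if $F_1(X)=F_2(AX)$ then $\Delta F_1(X)=(\Delta F_2)(AX)$. For (\ref{cuieg}) one has $\Delta F=2m(|x|^2+|y|^2)$ (Subsection \ref{exampleclifford}), so $m$ is a congruence invariant in every case, including $m=1$ and the degenerate range. Then $n+1=k\delta(m-1)$ determines $k$. Your remark about $\phi_{m-1}$ versus $-\phi_{m-1}$, together with the $\mathrm{diag}(U,U)$ congruences, then finishes uniqueness. With this invariant the FKM multiplicities are not needed at all.
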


\begin{proof}
We denote by $F_{m,k}$ the isoparametric polynomial given in the form of (\ref{cuieg})
if $\phi$ is a direct sum of $k$ irreducible representations of $Cl_{m-1}$,
$F^*_{m,k}$ the isoparametric polynomial given in the form of (\ref{fulleg}) or (\ref{notfulleg})
if $\psi\simeq k\psi_m$ when $m\equiv4,6,7,8\mod8$,
$\psi\simeq 2k\psi_m$ when $m\equiv2,3\mod8$,
and $\psi\simeq k\psi_m+k\psi_m'$ when $m\equiv1,5\mod8$.
By the discussion in Subsection \ref{exampleclifford} and Lemma \ref{QSD}, we know for each $(m,k)$,
there is one and only one $F_{m,k}$, one and only one $F^*_{m,k}$.

For each $(m,k)$, by the above classification result, $F_{m,k}$ is congruent to some $F^*_{m',k'}$.
Since $F_{m,k}$ is defined on $\mathbb{R}^{2k\delta(m-1)}\times\mathbb{R}^{2k\delta(m-1)}$, $\Delta\,F_{m,k}=2m(|x|^2+|y|^2)$, and $F^*_{m,k}$ is also,  $F_{m,k}$ is congruent to $F^*_{m',k'}$ if and only if $(m',k')=(m,k)$.
\end{proof}

\bibliography{Isoparametric_hypersurface.bbl}

\begin{flushleft}
			\medskip\noindent
			\begin{tabbing}
				XXXXXXXXXXXXXXXXXXXXXXXXXX*\=\kill
				Teng Wang\\
				School of Mathematical Sciences,\\
 University of Science and Technology of China,\\
				96 Jinzhai Road, Hefei, 230026, Anhui Province, China\\
				
				E-mail: wt4723@mail.ustc.edu.cn
				
			\end{tabbing}
		\end{flushleft}

\end{document}